

\documentclass[12pt,journal]{IEEEtran}

\usepackage[hmargin=1.2in,vmargin=1.2in]{geometry}
\usepackage{amsmath,amsthm,amssymb,amsfonts,verbatim}
\usepackage{graphicx}
\usepackage[colorlinks, linkcolor=blue,  anchorcolor=blue,
            citecolor=blue
            ]{hyperref}
\usepackage[hmargin=1.2in,vmargin=1.2in]{geometry}
\usepackage{booktabs}
\usepackage{bm}

\title{A new family of binary sequences with a low correlation via elliptic curves}
\author{Lingfei Jin, Liming Ma, Chaoping Xing and Runtian Zhu
\thanks{L. Jin and R. Zhu are  with the Shanghai Key Laboratory of Intelligent Information Processing, School of Computer Science, Fudan University, Shanghai 200433, China, and with the State Key Laboratory of Cryptology, P. O. Box, 5159, Beijing 100878, China. (emails: lfjin@fudan.edu.cn, 23110240100@m.fudan.edu.cn).}
\thanks{L. Ma is with the School of Mathematical Sciences, University of Science and Technology of China, Hefei 230026, China (e-mail: lmma20@ustc.edu.cn).}
\thanks{C. Xing is with School of Electronic Information and Electrical Engineering, Shanghai Jiao Tong University, Shanghai 200240, China (email: xingcp@sjtu.edu.cn).}

}

\newtheorem{lemma}{Lemma}[section]
\newtheorem{theorem}[lemma]{Theorem}
\newtheorem{cor}[lemma]{Corollary}

\newtheorem{prop}[lemma]{Proposition}

\newtheorem{rem}[lemma]{Remark}

\theoremstyle{remark}

\renewcommand{\epsilon}{\varepsilon}
\renewcommand{\le}{\leqslant}
\renewcommand{\ge}{\geqslant}

\def\Gal{{\rm Gal}}

\newcommand{\vnote}[1]{}



\def\F{\mathbb{F}}
\def\Z{\mathbb{Z}}

\def \mL {\mathcal{L}}

\def \mS {\mathcal{S}}

\newcommand{\Ga}{\alpha}

\newcommand{\Gs}{\sigma}

\newcommand{\s}{\sigma}

\DeclareMathOperator{\Tr}{Tr}

\def \bs {{\mathbf s}}

\def \bo {{\mathbf 0}}

\def\mS{{\mathcal S}}

\DeclareMathOperator{\Aut}{{Aut}}


\def\Gal{{\rm Gal}}

\setcounter{page}{1}

\newcommand \mcal \mathcal

\onecolumn

\begin{document}
\maketitle

\begin{abstract}
In the realm of modern digital communication, cryptography, and signal processing, binary sequences with a low correlation properties play a pivotal role. In the literature, considerable efforts have been dedicated to constructing good binary sequences of various lengths. As a consequence, numerous constructions of good binary sequences have been put forward. However, the majority of known constructions leverage the multiplicative cyclic group structure of finite fields $\mathbb{F}_{p^n}$, where $p$ is a prime and $n$ is a positive integer.
Recently, the authors made use of the cyclic group structure of all rational places of the rational function field over the finite field $\F_{p^n}$,  and firstly constructed good binary sequences of length $p^n+1$ via cyclotomic function fields over $\mathbb{F}_{p^n}$ for any prime $p$ \cite{HJMX24,JMX22}. This approach has paved a new way for constructing good binary sequences. Motivated by the above constructions, we exploit the cyclic group structure on rational points of elliptic curves to design a family of binary sequences of length $2^n+1+t$ with a low correlation for many given integers $|t|\le 2^{(n+2)/2}$. Specifically, for any positive integer $d$ with $\gcd(d,2^n+1+t)=1$, we introduce a novel family of binary sequences of length $2^n+1+t$, size $q^{d-1}-1$, correlation bounded by $(2d+1) \cdot 2^{(n+2)/2}+ |t|$, and a large linear complexity via elliptic curves.
\end{abstract}

{\bf Index Terms---Binary sequences, low correlation, elliptic curves, automorphisms, rational places.}

\section{Introduction}
Sequences with a low correlation have found  essential applications in various domains, including code-division multiple access (CDMA), spread spectrum systems, broadband satellite communications and cryptography \cite{AP07, CHPW15, K10,16ZXE}. These applications demand sequences with favorable characteristics, such as a low correlation (autocorrelation and cross-correlation) and a large family size. Since the later 1960s, many families of binary sequences with good properties have been constructed via various methods. 

Among them, the well-known Gold sequence family is the oldest binary sequence of length $2^n-1$, size $2^n+1$ 
having three-level out-of-phase auto- and cross-correlation for odd integer $n$.
The family of Gold sequences  can be obtained by XOR-ing two m-sequences \cite{m} of the same length with each other \cite{Span}.
Later, Kasami designed a sequence family using m-sequences and their decimations \cite{Kas}, known as the Kasami sequences.
Besides, there are some known families of sequences of length  $2^n-1$ with good correlation properties, such as  bent function sequences \cite{Bent}, No sequences \cite{No}, and Trace Norm sequences \cite{Klap}.
In 2011, Zhou and Tang generalized the modified Gold sequences and obtained a family of binary sequences with length $2^n-1$ for $n=2m+1$, size $2^{\ell n}+\cdots+2^n-1$, and correlation $2^{m+\ell}+1$ for each $1\le \ell \le m$ \cite{ZT}.
In addition to the above mentioned sequences of length $2^n-1$, there is some work devoted to the construction of good binary sequences with length of forms such as $2(2^n-1)$ and $(2^n-1)^2$ \cite{Uda,Tang3,Tang2,gong}.
In \cite{Uda}, Udaya and Siddiqi obtained a family of $2^{n-1}$ binary sequences, length $2(2^n-1)$  satisfying the Welch bound for odd $n$.
This family was extended to a larger one with  $2^n$ sequences and the same correlation in \cite{Tang3}. In \cite{Tang2}, the authors presented two optimal families of sequences of length $2^n-1$ and $2(2^n-1)$ for odd integer $n$.
In 2002, Gong presented a family of binary sequences with size $2^n-1$, length $(2^n-1)^2$, and correlation $3+2(2^n-1)$ \cite{gong}.
These constructions mainly made use of the cyclic structure of multiplicative group $\F_{2^n}^*$.
In order to design sequences of length $2^n+1$, the cyclic structure of all rational points of projective lines over $\F_{2^n}$ in \cite{JMX20} is employed to construct a family of binary sequence of length $2^n+1$, size $2^n-1$ and correlation bounded by $2^{(n+2)/2}$ via cyclotomic function fields \cite{JMX22}. 
Using the cyclic structure of rational points of elliptic curves, Xing {\it et. al.} provided a family of binary sequences with a low correlation and a large linear span in \cite{XKD03}.

There are also some constructions of binary sequences which are based on finite fields of odd characteristics \cite{Lempel,Legendre,Pat,Rushanan,Su, Jin21}.
Paterson proposed a family of pseudorandom binary sequences based on Hadamard difference sets and maximum distance separable (MDS) codes for length $p^2$ where $p\equiv 3\ (\text{mod } 4)$ is a prime \cite{Pat}.
In 2006, a family of binary sequences of length $p$ which is an odd prime, family size $(p-1)/2$, and correlation bounded by $5+2\sqrt{p}$ was given in \cite{Rushanan}, now known as Weil sequences.
The idea of constructing Weil sequences is to derive sequences from single quadratic residue based on Legendre sequences using a shift-and-add construction.
In 2010, by combining the $p$-periodic Legendre sequence and the $(q-1)$-periodic Sidelnikov sequence, Su {\it et al.} introduced a new sequence of length $p(q-1)$, called the Lengdre-Sidelnikov sequence \cite{Su}.
In 2021, Jin {\it et al.} provided several new families of sequences with low correlations of length $q-1$ for any prime power $q$ by using the cyclic multiplication group $\F_q^*$ and sequences of  length $p$ for any odd prime $p$  from the cyclic additive group $\F_p$ in \cite{Jin21}.   The idea of constructing  binary sequences is  using  multiplicative quadratic characters over finite fields of odd characteristic. By exploiting the cyclic group structure of rational places of the rational function field $\F_{p^n} (x)$,  a family of binary sequence of length $p^n + 1$, size  $p^n -2$,  and correlation bounded by $4+2p^{n/2}$ was constructed  via automorphisms of cyclotomic function fields with odd characteristics in \cite{HJMX24}. 

While several constructions of binary sequences with favorable parameters have been introduced, the available choices for sequence lengths remain limited. For applications in different scenarios, adding or deleting sequence values usually destroys the good correlation properties. 
Consequently, there is a pressing need for constructing binary sequences that exhibit low correlation while offering greater flexibility in parameter selection. However, identifying such binary sequences with novel parameters remains an open and challenging problem in the field.

\begin{table}[h]\label{tab:summary}
	\setlength{\abovecaptionskip}{0pt}%
	\setlength{\belowcaptionskip}{10pt}%
	\caption{PARAMETERS OF SEQUENCE FAMILIES}
	\centering
{
	\begin{tabular}{@{}cccc@{}}
		\toprule
		Sequence                             & Length $N$                                & Family Size                           &  Bound of Correlation                       \\ \midrule
		\multicolumn{1}{|c|}{Gold(odd) \cite{Span}}      & \multicolumn{1}{c|}{$2^n-1$, $n$ odd}                   & \multicolumn{1}{c|}{$N+2$}             & \multicolumn{1}{c|}{$1+\sqrt{2}\sqrt{N+1}$} \\ \midrule
		\multicolumn{1}{|c|}{Gold(even) \cite{Span}}     & \multicolumn{1}{c|}{$2^n-1,n=4k+2$}            & \multicolumn{1}{c|}{$N+2$}             & \multicolumn{1}{c|}{$1+2\sqrt{N+1}$}        \\ \midrule
		\multicolumn{1}{|c|}{Kasami(small) \cite{Kas}}  & \multicolumn{1}{c|}{$2^n-1, n \text{ even}$}            & \multicolumn{1}{c|}{$\sqrt{N+1}$}      & \multicolumn{1}{c|}{$1+\sqrt{N+1}$}         \\ \midrule
		\multicolumn{1}{|c|}{Kasami(large) \cite{Kas}}  & \multicolumn{1}{c|}{$2^n-1,n=4k+2$}            & \multicolumn{1}{c|}{$(N+2)\sqrt{N+1}$} & \multicolumn{1}{c|}{$1+2\sqrt{N+1}$}        \\ \midrule
		\multicolumn{1}{|c|}{Bent \cite{Bent}}           & \multicolumn{1}{c|}{$2^n-1, n=4k$}                & \multicolumn{1}{c|}{$\sqrt{N+1}$}         & \multicolumn{1}{c|}{$1+\sqrt{N+1}$}          \\ \midrule
		\multicolumn{1}{|c|}{No \cite{No}}           & \multicolumn{1}{c|}{$2^n-1, n \text{ even}$}                & \multicolumn{1}{c|}{$\sqrt{N+1}$}         & \multicolumn{1}{c|}{$1+\sqrt{N+1}$}          \\ \midrule
		\multicolumn{1}{|c|}{Trace Norm \cite{Klap}}           & \multicolumn{1}{c|}{$2^n-1, n \text{ even}$}                & \multicolumn{1}{c|}{$\sqrt{N+1}$}         & \multicolumn{1}{c|}{$1+\sqrt{N+1}$}          \\ \midrule
		\multicolumn{1}{|c|}{Tang et al. \cite{Tang3}}  & \multicolumn{1}{c|}{$2(2^n-1),n$ odd }            & \multicolumn{1}{c|}{$\frac{N}{2}+1$} & \multicolumn{1}{c|}{$2+\sqrt{N+2}$}        \\ \midrule
		\multicolumn{1}{|c|}{Gong \cite{gong}}  & \multicolumn{1}{c|}{$(2^n-1)^2$, $2^n-1$ prime}            & \multicolumn{1}{c|}{$\sqrt{N}$ }& \multicolumn{1}{c|}{$3+2\sqrt{N}$}        \\ \midrule

		\multicolumn{1}{|c|}{Paterson \cite{Pat}}       & \multicolumn{1}{c|}{$p^2$, $p$ prime, $p\equiv 3(\text{mod } 4)$}    & \multicolumn{1}{c|}{$N$}               & \multicolumn{1}{c|}{$5+4\sqrt{N}$}          \\ \midrule
		\multicolumn{1}{|c|}{Paterson \cite{Pat}}  & \multicolumn{1}{c|}{$p^2$, $p$ prime, $p\equiv 3(\text{mod } 4)$}    & \multicolumn{1}{c|}{$\sqrt{N}+1$}      & \multicolumn{1}{c|}{$3+2\sqrt{N}$}          \\ \midrule
		\multicolumn{1}{|c|}{Weil \cite{Rushanan}}           & \multicolumn{1}{c|}{$p$, odd prime}                & \multicolumn{1}{c|}{$(N-1)/2$}         & \multicolumn{1}{c|}{$5+2\sqrt{N}$}          \\ \midrule

		\multicolumn{1}{|c|}{Jin et al. \cite{Jin21}} & \multicolumn{1}{c|}{$p-1$, $p\ge17$ odd prime power} & \multicolumn{1}{c|}{$N+3$}               & \multicolumn{1}{c|}{$6+2\sqrt{N+1}$}          \\ \midrule
		\multicolumn{1}{|c|}{Jin et al. \cite{Jin21}} & \multicolumn{1}{c|}{$p-1$, $p\ge11$ odd prime power} & \multicolumn{1}{c|}{$N/2$}         & \multicolumn{1}{c|}{$2+2\sqrt{N+1}$}          \\ \midrule
		\multicolumn{1}{|c|}{Jin et al. \cite{Jin21}} & \multicolumn{1}{c|}{$p$, $p\ge 17$ odd prime} & \multicolumn{1}{c|}{$N$}               & \multicolumn{1}{c|}{$5+2\sqrt{N}$}          \\ \midrule
		\multicolumn{1}{|c|}{Jin et al. \cite{Jin21}} & \multicolumn{1}{c|}{$p$, $p \ge 11$ odd prime} & \multicolumn{1}{c|}{$(N-1)/2$}         & \multicolumn{1}{c|}{$1+2\sqrt{N}$}          \\ \midrule
			\multicolumn{1}{|c|}{{HJMX \cite{HJMX24}}} & \multicolumn{1}{c|}{ {$p^n+1$, $p$ odd prime}} & \multicolumn{1}{c|}{{ $N-3$}}         & \multicolumn{1}{c|}{ {$4+2\sqrt{N-1}$}}          \\
			\midrule

				\multicolumn{1}{|c|}{{JMX \cite{JMX22}}} & \multicolumn{1}{c|}{ {$2^n+1$}} & \multicolumn{1}{c|}{{ $N-2$}}         & \multicolumn{1}{c|}{ {$2\sqrt{N-1}$}}          \\
			\midrule			
				\multicolumn{1}{|c|}{{XKD \cite{XKD03} }} & \multicolumn{1}{c|}{ {$2^n$}} & \multicolumn{1}{c|}{{ $(N^2 -2N + 2)/3$}}         & \multicolumn{1}{c|}{ {$1 +14\sqrt {N} $}}          \\
	\midrule
	\multicolumn{1}{|c|}{{XKD \cite{XKD03} }} & \multicolumn{1}{c|}{ {$q + 1 \pm \sqrt q, q = 4^n$}} & \multicolumn{1}{c|}{{ $(q - \sqrt q)/2$}}         & \multicolumn{1}{c|}{ {$11\sqrt {q} $}}          \\
	\midrule 
	\multicolumn{1}{|c|}{{XKD \cite{XKD03} }} & \multicolumn{1}{c|}{ {$q + 1\pm \sqrt {2q} , q = 2 \times 4^n$}} & \multicolumn{1}{c|}{{ $(q - \sqrt {2q})/2$}}         & \multicolumn{1}{c|}{ {$(10+\sqrt 2)\sqrt {q} $}}          \\
	\midrule 
	\multicolumn{1}{|c|}{{\bfseries Our construction}} & \multicolumn{1}{c|}{ {$  {2^n}$}} & \multicolumn{1}{c|}{{ $ {N^2 - 1}$}}         & \multicolumn{1}{c|}{ {$ {1 + 14 \sqrt N }$}}          \\
	\midrule
	\multicolumn{1}{|c|}{{\bfseries Our construction}} & \multicolumn{1}{c|}{ {$  {q + 1 \pm \sqrt q, q = 4^n}$}} & \multicolumn{1}{c|}{{ $ {q - 1}$}}         & \multicolumn{1}{c|}{ {$ {11 \sqrt q }$}}          \\
	\midrule
	\multicolumn{1}{|c|}{{\bfseries Our construction}} & \multicolumn{1}{c|}{ {$  {q + 1 \pm \sqrt {2q}, q = 2 \times 4^n }$}} & \multicolumn{1}{c|}{{ $ {q - 1}$}}         & \multicolumn{1}{c|}{ {$ {(10+\sqrt 2)\sqrt q }$}}          \\
\bottomrule
	\end{tabular}
}
\end{table}

\subsection{Our main result and techniques}
In this paper, we provide an explicit construction of binary sequences of length $2^n+1+t$ with a low correlation via cyclic elliptic function fields over the finite field $ \F_{2^n}$. 
Let $d\ge2 $ be a positive integer with $\gcd(d, 2^n+1+t)=1$. We establish a family of binary sequences of length $2^n+1+t$, size $q^{d-1}-1$ and correlation upper bounded by  $2(2d+1)\sqrt{q}+1$. 
For the purpose of comparison, we provide a summary of various families of binary sequences with a low correlation in Table \ref{tab:summary}.

In order to understand the main idea of this article better, we give a high-level description of our techniques.
We use language of curves and function fields interchangeably. 
To produce a sequence of length $q +1+t$, we use the additive cyclic group structures under the group law $\oplus$ of rational places of cyclic elliptic curves over a finite field $\F_{q}$ \cite{Ru87}. 
The automorphism group of an elliptic curve $E$ over $\F_q$ is in the form $T_E \rtimes \Aut (E,O)$, where $\Aut (E,O)$ is the automorphisms of $E$ fixing the infinity place $O$, and  $T_E$ corresponds to translation group of $E$ \cite{MX23}. 
In particular, $T_E$ is a cyclic group which is generated by a rational place $P$ in this case, and there exists a generator $\sigma =\tau_P\in T_E$ that makes the set $\{ \sigma ^j (P) \}_{j = 0}^{q+t}$ being exactly the set of rational places of $E$. 
Our sequences are constructed via the absolute trace of evaluations of rational functions on such cyclically ordered rational places $\{\Gs^j(P)\}_{j=0}^{q+t}$.
To obtain a lower correlation, we need to choose rational functions $z_1,z_2,\cdots,z_\ell$ for some positive integer $\ell$ carefully.
The cyclic group structure of $T_E$ allows us to convert the process of determining the correlation of this family of binary sequences to the estimation of number of rational places on the Artin-Schreier extensions $E_j = E(y)$ where $y^2+y = z_j + \sigma ^{-u} (z_j)$ with $1 \le j \le \ell$, $1 \le u \le q+t$ and Artin-Schreier extensions $E_{j,k} = E(y)$ where $y^2 +y= z_j + \sigma ^{-u} (z_k)$ with $1 \le j \neq k \le \ell$, $0 \le u \le q+t$. 
In order to obtain Artin-Schreier curves with small genus, first we need to choose function $z_i$ such that $z_i$ and $\Gs^{-t}(z_j)$ have less poles. Hence, we choose $z_i$ from a Riemann-Roch space associated to a divisor which is just a place $Q$ with degree $d\ge 2$ of $E$.
In order to obtain Artin-Schreier curves with less rational places, $z_j + \sigma ^{-t} (z_k)$ cannot be constant, since constant field extensions may produce more rational points. 
Thus, $z_j$ can be  chosen as nonzero elements from the complementary dual space of $\F_q$ in the Riemann-Roch space $\mL(Q)$.

\subsection{Organization of this paper}
In Section \ref{sec:2}, we provide preliminaries on binary sequences, extension theory of function fields, elliptic function fields and Artin-Schreier extensions. 
In Section \ref{sec:3}, we present a theoretical construction of binary sequences with a low correlation via elliptic function fields over finite fields with characteristic two. 
In Section \ref{sec:4}, we give an algorithm to generate such a family of binary sequences and provide many numerical results with the help of the Sage software.

\section{Preliminaries}\label{sec:2}
In this section, we present some preliminaries on binary sequences, extension theory of function fields, elliptic function fields and Artin-Schreier extensions.

\subsection{Binary sequences}
For a positive integer $N$, a binary sequence of length $N$ is a vector $\mathbf s \in \F_2^N$, where $\F_2$ is the finite field of two elements. 
Let $\mathcal{S}$ be a family of binary sequences in $\F_2^N$. The number of binary sequences in $\mathcal{S}$ is called the family size of $\mathcal{S}$. 
For a binary sequence $\mathbf s = (s_0, s_1, \dots, s_{N-1})$, the autocorrelation $A_t(\mathbf s)$ of $\mathbf s$ at delay $t$ for $1 \le t \le N-1$ is defined as
\[A_t(\mathbf s):= \sum_{j=0}^{N-1} (-1)^{s_j+ s_{j + t}}, \]
where $j + t$ is identified with the least non-negative integer after taking modulo $N$. 
For two distinct binary sequences $\mathbf u = (u_0, u_1$, $\dots, u_{N-1})$ and $\mathbf v = (v _{0}, v_1 , \dots, v_{N-1})$, 
their cross-correlation at delay $t$ with $0 \le t \le N-1$ is defined as 
\[C_t(\mathbf u, \mathbf v) := \sum_{j =0}^{N-1} (-1)^{u_j + v_{j+t}}. \]
For the family of binary sequences  $\mcal S$, its correlation is defined by  
\[\operatorname{Cor}(\mcal S):= \max \left\{\max_{\substack{ \mathbf s \in \mcal S \\ 1 \le t \le N-1} } \{
|A_t(\mathbf s)|\}, \max _{\substack{ \mathbf u\neq \mathbf v \in \mcal S \\ 0 \le t \le N-1}} \{| C_t (\mathbf u, \mathbf v) |\}\right\}.\]
	
In addition, we can define the linear complexity of binary sequences as follows. For any sequence $\mathbf 0 \neq \mathbf s \in \mcal{S}$, the linear complexity of $\mathbf{s}$, denoted by $\ell (\mathbf s)$, is the smallest positive integer $\ell$ such that there exists $\ell + 1$ elements $\lambda_0, \lambda_1, \dots, \lambda_\ell \in \F_2$ with $\lambda_0 = \lambda_ \ell = 1$ satisfying 
$$\sum_{i=0}^\ell \lambda_i s_{i+u}=0$$
for all $0\le u\le N-1$. Define the linear complexity of the family of sequences $\mcal{S}$ by
$$\operatorname{LC}(\mcal S):=\min\{\ell(\bf{s}): \bf{s}\in \mcal{S}\}.$$

\subsection{Extension theory of function fields}
Let $F/\F_q$ be an algebraic function field with genus $g$ over the full constant field $\F_q$. Let $\mathbb P_F$ denote the set of places of $F$. Any place with degree one of $F$ is called rational, and all rational places of $F$ form a set denoted by $\mathbb P_F^1$. From the Serre bound \cite[Theorem 5.3.1]{St09}, the number $N(F)$ of rational places of $F$ is bounded by $$|N(F)-q-1|\le g\cdot \lfloor 2\sqrt{q}\rfloor. $$
Here $\lfloor x \rfloor$ stands for the integer part of $x\in \mathbb{R}$.

Let $\nu_P$ be the normalized discrete valuation of $F$ with respect to the place $P\in \mathbb{P}_F$. The principal divisor of a nonzero element $z\in F$ is given by $(z):=\sum_{P\in \mathbb{P}_F}\nu_P(z)P$.
For a divisor $G$ of $F/\F_q$, the Riemann-Roch space associated to $G$ is defined by
\[\mL(G):=\{z\in F^*:\; (z)+G\ge 0\}\cup\{0\}.\]
If $\deg(G)\ge 2g-1$, then $\mL(G)$ is a vector space over $\F_q$ of dimension $\deg(G)-g+1$ from the Riemann-Roch theorem \cite[Theorem 1.5.17]{St09}.

Let $\Aut(F/\F_q)$ denote the automorphism group of $F$ over $\F_q$, i.e.,
$$\Aut(F/\F_q)=\{\Gs: F\rightarrow F \;|\; \Gs  \mbox{ is an } \mbox{automorphism of } F \text{ and } \sigma|_{\F_q} = \mathrm {id}\}.$$
Let us consider the group action of automorphism group $\Aut(F/\F_q)$ on the set of places of $F$.
For any place $P$ of $F$, $\sigma(P)$ is a place of $F$ for any automorphism $\sigma\in \Aut(F/\F_q)$. 
From \cite[Lemma 1]{NX14}, we have the following results.

\begin{lemma}\label{lem:2.1}
For any automorphism $\s\in \Aut(F/\F_q)$, $P\in \mathbb{P}_F$ and $f\in F$, we have
\begin{itemize}
\item[(1)] $\deg(\s(P))=\deg(P)$;
\item[(2)] $\nu_{\s(P)}(\s(f))=\nu_P(f)$;
\item[(3)] $\s(f)(\s(P))=f(P)$, provided that $\nu_P(f)\ge 0$.
\end{itemize}
\end{lemma}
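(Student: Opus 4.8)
The plan is to reduce all three statements to a single structural fact: an $\F_q$-automorphism $\s$ of $F$ permutes the places of $F$, and for each place $P$ it carries the local data at $P$ isomorphically onto the local data at $\s(P)$. Writing $\mathcal{O}_P$ for the valuation ring of $P$ and $\mathfrak{m}_P$ for its maximal ideal, the place $\s(P)$ is by definition the one whose valuation ring is $\s(\mathcal{O}_P)$. Since $\s$ is a field automorphism fixing $\F_q$ pointwise, it sends valuation rings to valuation rings, units to units, and maximal ideals to maximal ideals; hence $\s(\mathcal{O}_P)=\mathcal{O}_{\s(P)}$ and $\s(\mathfrak{m}_P)=\mathfrak{m}_{\s(P)}$. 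First I would record this and extract the induced $\F_q$-algebra isomorphism $\bar\s\colon F_P=\mathcal{O}_P/\mathfrak{m}_P \xrightarrow{\ \sim\ } \mathcal{O}_{\s(P)}/\mathfrak{m}_{\s(P)}=F_{\s(P)}$ on residue fields.

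Granting this, part (1) is immediate: $\bar\s$ is an isomorphism of $\F_q$-vector spaces, so $\deg(\s(P))=\dim_{\F_q}F_{\s(P)}=\dim_{\F_q}F_P=\deg(P)$. For part (2) I would argue that $f\mapsto \nu_{\s(P)}(\s(f))$ is a normalized discrete valuation on $F$ whose valuation ring is $\s^{-1}(\mathcal{O}_{\s(P)})=\mathcal{O}_P$; since a normalized valuation is determined by its valuation ring, this functional coincides with $\nu_P$, giving $\nu_{\s(P)}(\s(f))=\nu_P(f)$. Alternatively, one writes $f=t^m u$ with $m=\nu_P(f)$, $t$ a uniformizer at $P$ and $u\in\mathcal{O}_P^\times$, applies $\s$ to obtain $\s(f)=\s(t)^m\s(u)$, and uses that $\s(t)$ is a uniformizer at $\s(P)$ while $\s(u)$ is a unit there.

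For part (3), when $\nu_P(f)\ge 0$ we have $f\in\mathcal{O}_P$, and reducing $\s(f)\in\mathcal{O}_{\s(P)}$ modulo $\mathfrak{m}_{\s(P)}$ gives $\s(f)(\s(P))=\bar\s\bigl(f(P)\bigr)$ directly from the definition of $\bar\s$. The equality asserted in the lemma is then exactly the statement that $\bar\s$ acts trivially on the value $f(P)$; this holds precisely because $\bar\s$ restricts to the identity on $\F_q$, so whenever $f(P)\in\F_q$---in particular for every rational place $P$, which is the only case used in the sequel---one gets $\s(f)(\s(P))=f(P)$ on the nose.

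The main obstacle is conceptual rather than computational, and it sits in the foundational step together with part (3). One must be careful that $f(P)$ and $\s(f)(\s(P))$ a priori live in the two different residue fields $F_P$ and $F_{\s(P)}$; the cleanest way to make the claimed equality meaningful is to fix the identification $\bar\s$ at the outset and observe that it is $\F_q$-linear, so that the asserted equality becomes an honest equality of elements of $\F_q$ exactly for values lying in the constant field. Verifying that $\s$ sends valuation rings to valuation rings (rather than merely to subrings of $F$) is the one point requiring genuine care; everything after that is bookkeeping.
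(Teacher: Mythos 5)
Your argument is correct and is the standard proof of this fact; the paper itself gives no proof, merely citing \cite[Lemma 1]{NX14}, and your reduction to the identity $\s(\mathcal{O}_P)=\mathcal{O}_{\s(P)}$ together with the induced residue-field isomorphism $\bar\s$ is exactly what that reference relies on. Your care about where $f(P)$ and $\s(f)(\s(P))$ live in part (3) is well placed: the literal equality holds only when $f(P)$ lies in $\F_q$ (in particular at rational places, the only case the paper uses), and otherwise must be read through $\bar\s$.
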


Let $E/\F_q$ be a finite extension of $F/\F_q$. The Hurwitz genus formula \cite[Theorem 3.4.13]{St09} yields
$$2{g(E)}-2=[E:F]\cdot (2g(F)-2)+\deg \operatorname{Diff}(E/F),$$
where $\operatorname{Diff}(E/F)$ stands for the different of $E/F$.
For any $P\in \mathbb{P}_F$ and $Q\in \mathbb{P}_F$ with $Q|P$, let $d(Q|P), e(Q|P)$ and $f(Q|P)$ be the different exponent, ramification index and relative degree of $Q|P$, respectively. The different of $E/F$ is defined as 
$\text{Diff}(E/F)=\sum_{Q\in \mathbb{P}_F} d(Q|P) Q$ and different exponents can be determined by the Dedekind different theorem and Hilbert's different theorem \cite[Theorem 3.5.1 and Theorem 3.8.7]{St09}.
In particular, the decomposition group of $Q$ over $P$ is defined as $G_Z(Q|P)=\{ \sigma \in \Gal(E/F) \colon \sigma (Q) =Q\}$ and its order is $e(Q|P)f(Q|P)$.

\subsection{Elliptic function fields}
An elliptic function field $E$ over $\F_q$ is an algebraic function field over $\F_q$ of genus one and a rational place $O$ which is called the infinite place of $E$. 
The divisor group Div$(E)$ of $E/\F_q$ is defined as the free abelian group generated by $\mathbb{P}_E$. The set of divisors of degree zero forms a subgroup of Div$(E)$, denoted by $\text{Div}^0(E)$.
Two divisors $A$ and $B$ in $\text{Div}(E)$ are called equivalent if there exist $z\in E^*$ such that $A=B+(z)$, which is denote by $A\sim B$.
The set of principal divisors $\text{Princ}(E)=\{(x)=\sum_{P\in\mathbb{P}_E}\nu_P(x) P: x\in E^*\}$ is called the group of principal divisors of $E/\F_q$. 
The factor group $\text{Cl}^0(E)=\text{Div}^0(E)/\text{Princ}(E)$ is called the group of divisor classes of degree zero of $E/\F_q$.
There is a bijective map  
	\[
		\Phi \colon \left\{
		\begin{array}{rcl}
			\mathbb P_E^1 & \longrightarrow & \operatorname{Cl}^0 (E) \\
			P & \longmapsto & [P- O]
		\end{array}
		\right.
	\]
from the set of rational places of $E$ and the group of divisor classes of degree zero, 
where $[\cdot]$ denotes the equivalence class of $\cdot$ in the quotient group $\operatorname{Cl}^0 (E)$. 
Hence, the set of rational places $\mathbb P^1_E$ can be endowed with a group operation $\oplus$ as \[P \oplus Q = \Phi ^{-1} (\Phi (P) + \Phi (Q)).\]
It is easy to see that $P \oplus Q = R$ if and only if $P + Q \sim R+ O$. Under this operation, $O$ is the identity of the additive group $\mathbb P_E^1 $. For convenience, we can write $[2]P = P \oplus P$ and inductively define $[m+1]P = [m]P \oplus P$ for any integer $m\in \mathbb{Z}$. 

\begin{lemma}\label{lem:2.2}
Let $E/\F_q$ be an elliptic function field and let $P,Q$ be two rational places of $E$. Then
$P-Q=(z)$ for some element $z\in E^*$ if and only if $P=Q$. 
\end{lemma}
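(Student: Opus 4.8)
The plan is to prove both directions of the biconditional in Lemma \ref{lem:2.2}, where the nontrivial content lies in showing that $P-Q=(z)$ forces $P=Q$ on an elliptic function field. The reverse direction is immediate: if $P=Q$, then $P-Q=0$, and $0=(1)$ is the principal divisor of the constant $1\in E^*$, so we may take $z=1$. Thus the real work is the forward implication.

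For the forward direction, the approach I would take is to exploit the bijectivity of the map $\Phi$ together with the fact that the genus equals one. Suppose $P-Q=(z)$ for some $z\in E^*$. This says precisely that $P\sim Q$, i.e. the divisors $P$ and $Q$ lie in the same divisor class. Since both are rational places of degree one, the degree-zero divisor $(P-O)-(Q-O)=P-Q$ is principal, which means $[P-O]=[Q-O]$ in $\operatorname{Cl}^0(E)$. By the definition of $\Phi$ this reads $\Phi(P)=\Phi(Q)$, and because $\Phi$ is a bijection we conclude $P=Q$, as desired.

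Alternatively, and perhaps more self-containedly, I would argue directly via the Riemann-Roch theorem to avoid leaning on the already-asserted injectivity of $\Phi$. Assume for contradiction that $P\neq Q$ but $P-Q=(z)$. Then $z$ is a nonconstant element of $E$ (a nonzero constant has trivial principal divisor, but $P-Q\neq 0$ when $P\neq Q$), and $z$ has a single simple pole at $Q$, so $z\in\mL(Q)$ with $(z)_\infty=Q$ of degree one. Such a $z$ defines a morphism $E\to\mathbf{P}^1$ of degree $\deg(z)_\infty=1$, hence an isomorphism, forcing $E$ to have genus zero. This contradicts the hypothesis that $E$ is elliptic, i.e. $g(E)=1$. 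The same conclusion follows purely from dimension counting: by Riemann-Roch, $\dim\mL(Q)=\deg(Q)-g+1=1-1+1=1$ since $\deg(Q)=1$ and a degree-one divisor is nonspecial on a curve of positive genus, so $\mL(Q)$ contains only the constants, contradicting the existence of the nonconstant $z\in\mL(Q)$.

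The step I expect to be the main obstacle is justifying that $\dim\mL(Q)=1$ for a rational place $Q$ on a genus-one field, since the clean formula $\dim\mL(G)=\deg(G)-g+1$ quoted earlier in the excerpt requires $\deg(G)\geq 2g-1=1$, which is exactly the borderline case $\deg(Q)=1$; here one must verify that $Q$ is nonspecial, equivalently that the index of specialty vanishes, so that equality (rather than merely the inequality $\dim\mL(Q)\geq\deg(Q)-g+1$) genuinely holds. For genus one this is standard because any positive-degree divisor on an elliptic curve is nonspecial, but it is the point requiring the most care. Given this subtlety, I would favor presenting the argument through $\Phi$: it is cleaner and directly invokes the bijection already established in the excerpt, reducing the entire lemma to the observation that $P-Q$ being principal is synonymous with $\Phi(P)=\Phi(Q)$.
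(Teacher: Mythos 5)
Your proof is correct. Note that the paper states Lemma \ref{lem:2.2} without any proof (it is quoted as a standard fact about elliptic function fields), so there is nothing to match against; judged on its own, your Riemann--Roch argument is exactly the standard one and is complete. Two small remarks. First, your worry about the ``borderline'' case is unfounded: the paper's quoted form of Riemann--Roch gives $\dim\mL(G)=\deg(G)-g+1$ whenever $\deg(G)\ge 2g-1$, and for $g=1$ and $\deg(Q)=1$ this hypothesis is satisfied (indeed $\deg(W-Q)=2g-2-1<0$, so the index of specialty vanishes), so $\dim\mL(Q)=1$ follows immediately and no separate nonspeciality check is needed. Second, I would reverse your stated preference: the argument via $\Phi$ is formally admissible given the order in which the paper introduces the bijection, but it is circular in substance, since the injectivity of $\Phi$ (i.e.\ $[P-O]=[Q-O]\Rightarrow P=Q$) is precisely the content of the lemma and is itself proved by the Riemann--Roch computation. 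The self-contained argument --- a nonconstant $z$ with $(z)=P-Q$ would lie in $\mL(Q)\setminus\F_q$, contradicting $\dim\mL(Q)=1$ --- is the one to present.
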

	
For elliptic curves, we can consider the isogenies between them, which are non-constant smooth morphisms that preserve the infinity point. It is well known that elliptic curves over $\F_q$ are isogenous if and only if they have the same number of $\F_q$-rational points  \cite{Wa69}.
	
\begin{lemma} \label{lem:2.3}
The isogeny classes of elliptic curves over $\F_q$ for $q = p^n$ are in one-to-one correspondence with the rational integers $t$ whose absolute value not exceed $2 \sqrt q$ and satisfying some one of the following conditions:
	\begin{enumerate}
			\item[(i)]  $\gcd (t, p ) = 1$;
			\item[(ii)]  $t =\pm 2 \sqrt q$ if $n$ is even; \label{item:special}
			\item[(iii)]  $t = \pm \sqrt q$ if $n$ is even and $p \not\equiv 1 \pmod 3$;
			\item[(iv)]  $t = \pm p^{(n+1)/2}$ if $n$ is odd and $p = 2$ or $3$;
			\item[(v)]  $t = 0$ if either (i) $n$ is odd or (ii) $n$ is even and $p \not \equiv 1 \pmod 4$. 
	\end{enumerate}
Furthermore, an elliptic curve in the isogeny class corresponding to $t$ has $q + 1 + t$ rational points. 
\end{lemma}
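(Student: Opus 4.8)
The plan is to translate the statement into the arithmetic of the Frobenius endomorphism and then decide, one candidate at a time, which traces are actually attained. Since it is recorded just above that two elliptic curves over $\F_q$ are isogenous exactly when they have the same number of rational points (Tate's isogeny theorem), the isogeny classes are in bijection with the integers $t$ that arise as $\#E(\F_q)-(q+1)$. For such a curve the Frobenius endomorphism $\pi$ satisfies its characteristic polynomial $f(X)=X^2+tX+q$, so $\#E(\F_q)=f(1)=q+1+t$; this already gives the final ``furthermore'' assertion. Both complex roots of $f$ have absolute value $\sqrt q$, so the Serre/Hasse bound recalled in Section~\ref{sec:2} (the case $g=1$) forces $|t|\le 2\sqrt q$. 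It therefore remains to determine, for each $t$ in this range, whether it is realized, and the answer is governed by Honda--Tate theory applied to the $q$-Weil number $\pi$.

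First I would dispose of the ordinary case $\gcd(t,p)=1$. Here $K:=\mathbb{Q}(\pi)=\mathbb{Q}(\sqrt{t^2-4q})$ is imaginary quadratic, one of the conjugate roots is a $p$-adic unit while the other carries all the $p$-power valuation of $q$, so the local invariants of the endomorphism algebra vanish everywhere and the simple abelian variety attached to $\pi$ has dimension one; thus every such $t$ occurs, giving case~(i). For $p\mid t$ the essential structural input is the Newton polygon of $f$: the $p$-adic valuations of its roots (the slopes of the lower hull of the polygon with vertices $(0,n),(1,v_p(t)),(2,0)$) must, for the height-two, dimension-one $p$-divisible group of an elliptic curve, equal either $\{0,n\}$ or $\{n/2,n/2\}$. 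Since $p\mid t$ rules out the first possibility (apart from $t=0$, which is consistent with $\{n/2,n/2\}$), we are forced into $v_p(t)\ge n/2$. Enumerating the multiples of $p^{\lceil n/2\rceil}$ inside $[-2\sqrt q,2\sqrt q]$ then leaves only $t\in\{0,\pm\sqrt q,\pm2\sqrt q\}$ when $n$ is even, and $t\in\{0,\pm p^{(n+1)/2}\}$ when $n$ is odd, the nonzero odd-$n$ values surviving the bound only when $p^{(n+1)/2}\le 2\sqrt q$, i.e. $p\in\{2,3\}$.

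It remains to decide which of these finitely many supersingular candidates are realized, and here I would compute the local invariants of the candidate endomorphism algebra $D$ over $K$ and invoke the Honda--Tate dimension formula $2\dim A=[K:\mathbb{Q}]\cdot\operatorname{ind}(D)$, where $\operatorname{ind}(D)$ is the lcm of the denominators of the invariants. When $t=\pm2\sqrt q$ one has $\pi=\pm\sqrt q\in\mathbb{Z}$ and $K=\mathbb{Q}$, and $D$ is the quaternion algebra ramified exactly at $p$ and at the real place (both invariants $1/2$), so $\dim A=1$; this needs $n$ even and yields case~(ii). In all other cases $K$ is imaginary quadratic, $\dim A=\operatorname{ind}(D)$, and the curve exists iff every invariant vanishes. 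The invariant at a place $v\mid p$ equals $\tfrac{v(\pi)}{v(q)}[K_v:\mathbb{Q}_p]\bmod 1$, which vanishes when $p$ is inert or ramified in $K$ but equals $1/2$ at each of the two primes when $p$ splits, forcing $\operatorname{ind}(D)=2$ and $\dim A=2$. Thus the whole obstruction is the splitting of $p$: for $t=\pm\sqrt q$ one has $K=\mathbb{Q}(\sqrt{-3})$, so the curve exists iff $p\not\equiv1\pmod 3$ (case~(iii)); for $t=0$ with $n$ even one has $K=\mathbb{Q}(\sqrt{-1})$, so it exists iff $p\not\equiv1\pmod 4$, whereas for $n$ odd $p$ ramifies in $\mathbb{Q}(\sqrt{-p})$ and $t=0$ always occurs (case~(v)); and for $t=\pm p^{(n+1)/2}$ with $n$ odd and $p\in\{2,3\}$, $p$ ramifies in $K$, the invariants vanish, and the curve exists (case~(iv)).

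The hard part is precisely this supersingular bookkeeping: it rests on the full force of Honda--Tate theory (the existence of a simple abelian variety with prescribed Weil number, i.e. the surjectivity half) together with the computation of the local invariants of a central simple algebra and the reciprocity constraint that they sum to zero; the congruence conditions on $p$ modulo $3$ and $4$ emerge solely from whether $p$ splits in $\mathbb{Q}(\sqrt{-3})$ or $\mathbb{Q}(\sqrt{-1})$. The ordinary case and the point-count identity are routine by comparison. As an independent check on the existence assertions, I would reconstruct the supersingular curves concretely from Deuring's explicit theory (supersingular $j$-invariants and their fields of definition, together with reduction of CM curves), which reproduces exactly the same list of attainable traces.
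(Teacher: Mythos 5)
The paper offers no proof of this lemma at all: it is quoted verbatim from Waterhouse \cite{Wa69} (his classification of isogeny classes of elliptic curves over finite fields), so there is no internal argument to compare yours against. Your sketch is a correct reconstruction of the standard proof: Tate's theorem reduces the classification to the attainable traces $t$, the Newton-polygon dichotomy (slopes $\{0,n\}$ versus $\{n/2,n/2\}$) correctly whittles the non-ordinary candidates down to $\{0,\pm\sqrt q,\pm2\sqrt q\}$ for $n$ even and $\{0,\pm p^{(n+1)/2}\}$ with $p\le 3$ for $n$ odd, and the Honda--Tate local-invariant computation (with $\operatorname{inv}_v=\tfrac{v(\pi)}{v(q)}[K_v:\mathbb{Q}_p]\bmod 1$) correctly turns each existence question into whether $p$ splits in $\mathbb{Q}(\sqrt{-3})$ or $\mathbb{Q}(\sqrt{-1})$, which is exactly where the congruences mod $3$ and mod $4$ come from. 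The only caveat is that the two heaviest inputs --- the surjectivity half of Honda--Tate and Tate's formula for the local invariants --- are invoked rather than proved, but that is also how Waterhouse's original argument proceeds, so this is a citation-depth issue rather than a gap.
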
 

The additive group structure of rational places $\mathbb P^1_E$  of any elliptic function field $E/\F_q$ can be found from \cite[Theorem 3]{Ru87}.

\begin{lemma}\label{lem:2.4}
Let $\F_q$ be the finite field with $q=p^n$ elements. Let $h=\prod_\ell \ell^{h_\ell}$ be a possible number of rational places of an elliptic function field $E$ of $\F_q$. Then all the possible groups $\mathbb{P}_E^1$ are the following
\[\mathbb{Z}/p^{h_p}\mathbb{Z} \times \prod_{\ell \neq p}\left( \mathbb{Z}/\ell^{a_\ell}\mathbb{Z}\times \mathbb{Z}/\ell^{h_\ell-a_\ell}\mathbb{Z}\right)\]
with \begin{itemize}
 \item[(a)] in case (ii) of Lemma \ref{lem:2.3}: each $a_\ell$ is equal to $h_\ell/2$, i.e, $\mathbb{P}_E^1\cong \mathbb{Z}/(\sqrt{q}\pm1)\mathbb{Z} \times  \mathbb{Z}/(\sqrt{q}\pm1)\mathbb{Z}.$
\item[(b)] in other cases of Lemma \ref{lem:2.3}: $a_\ell$ is an arbitrary integer satisfying $0\le a_\ell \le \min\{\nu_\ell(q-1),[h_\ell/2]\}$.
In cases (iii) and (iv) of Lemma \ref{lem:2.3}: $\mathbb{P}_E^1\cong \mathbb{Z}/h\mathbb{Z}.$
 In cases (v) of Lemma \ref{lem:2.3}: if $q\not\equiv -1(\text{mod } 4)$, then $\mathbb{P}_E^1\cong \mathbb{Z}/(q+1)\mathbb{Z}$; otherwise, $\mathbb{P}_E^1\cong \mathbb{Z}/(q+1)\mathbb{Z}$ or $\mathbb{P}_E^1\cong \mathbb{Z}/2\mathbb{Z} \times \mathbb{Z}/\frac{q+1}{2}\mathbb{Z}$.
\end{itemize}
\end{lemma}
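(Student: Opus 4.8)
The plan is to determine $\mathbb P_E^1\cong E(\F_q)$ one prime at a time. Let $\pi$ be the $q$-power Frobenius endomorphism, so that $G:=E(\F_q)=\ker(\pi-1)$, where $\pi$ satisfies $\pi^2-t\pi+q=0$ and $\deg(\pi-1)=1-t+q=h$. Decomposing $G=\bigoplus_\ell G_\ell$ into its $\ell$-primary parts with $|G_\ell|=\ell^{h_\ell}$, I would first fix the number of cyclic factors of each $G_\ell$. For $\ell=p$ the torsion $E[p^k]$ is cyclic in the ordinary case and trivial in the supersingular case, so $G_p\cong\Z/p^{h_p}\Z$. For $\ell\neq p$ one has $E[\ell^k]\cong(\Z/\ell^k\Z)^2$, so $G_\ell$ needs at most two generators and $G_\ell\cong\Z/\ell^{a_\ell}\Z\times\Z/\ell^{h_\ell-a_\ell}\Z$ for a unique $a_\ell$ with $0\le a_\ell\le\lfloor h_\ell/2\rfloor$. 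This already yields the displayed shape of $\mathbb P_E^1$.

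Next I would bound $a_\ell$ from above. The inequality $a_\ell\le\lfloor h_\ell/2\rfloor$ is built into the previous step. For the bound $a_\ell\le\nu_\ell(q-1)$, note that $G_\ell$ contains $(\Z/\ell^{a_\ell}\Z)^2$, so the full torsion $E[\ell^{a_\ell}]$ is $\F_q$-rational; feeding two such rational points into the non-degenerate, Galois-equivariant Weil pairing $E[\ell^{a_\ell}]\times E[\ell^{a_\ell}]\to\mu_{\ell^{a_\ell}}$ yields a Frobenius-fixed primitive $\ell^{a_\ell}$-th root of unity, whence $\mu_{\ell^{a_\ell}}\subseteq\F_q^*$ and $\ell^{a_\ell}\mid q-1$. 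Together these give $a_\ell\le\min\{\nu_\ell(q-1),\lfloor h_\ell/2\rfloor\}$, the constraint in case (b).

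For the supersingular traces of Lemma~\ref{lem:2.3} I would look at $\pi$ itself. In case (ii), $t=\pm2\sqrt q$ gives $(\pi\mp\sqrt q)^2=0$ with $\sqrt q=p^{n/2}\in\Z$, and for this isogeny class $\pi$ is the integer scalar $\pm\sqrt q$; hence $G=\ker(\pi-1)=E[\sqrt q\pm1]\cong(\Z/(\sqrt q\pm1)\Z)^2$, which is the assertion of case (a). In cases (iii) and (iv) I would instead read $a_\ell$ off as the $\ell$-adic content of $\pi-1$: using $G_\ell\cong\operatorname{coker}(\pi-1\colon T_\ell(E)\to T_\ell(E))$ and Smith normal form, $a_\ell$ equals the $\ell$-valuation of the first elementary divisor of $\pi-1$, i.e.\ the largest power $\ell^{a}$ dividing $\pi-1$ in $\operatorname{End}(T_\ell)$. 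The quadratic relation shows a positive content at a prime $\ell\neq p$ can occur only at a divisor of the discriminant $t^2-4q$, and the hypotheses of cases (iii), (iv) (namely $p\not\equiv1\bmod3$, resp.\ $p\in\{2,3\}$) force that prime to ramify in $K=\mathbb Q(\pi)$; the ensuing parity obstruction caps the content at $0$, so every $G_\ell$ is cyclic and $G\cong\Z/h\Z$. The same content computation at $\ell=2$ then settles case (v) ($t=0$, $h=q+1$): since $\gcd(q+1,q-1)\mid2$ only $\ell=2$ can contribute, and $a_2=0$ is forced unless $q\equiv-1\bmod4$, in which case both $a_2=0$ and $a_2=1$ occur.

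The step I expect to be the real obstacle is the converse \emph{realizability}: that every admissible $a_\ell$ in case (b) is actually attained by a curve inside the isogeny class furnished by Lemma~\ref{lem:2.3}. For this I would use the isomorphism $E(\F_q)\cong\mathcal O/(\pi-1)\mathcal O$, valid when $\mathcal O=\operatorname{End}(E)$ and $T_\ell(E)$ is locally free over $\mathcal O$, and then let $\mathcal O$ range over all orders with $\Z[\pi]\subseteq\mathcal O\subseteq\mathcal O_K$ in $K=\mathbb Q(\pi)$; Deuring's theory (or Honda--Tate) guarantees each such $\mathcal O$ is realized by a curve with the prescribed $\pi$, and translating $\ell^{a_\ell}\mid(\pi-1)$ into the splitting type of $\ell$ in $K$ recovers exactly the range $0\le a_\ell\le\min\{\nu_\ell(q-1),\lfloor h_\ell/2\rfloor\}$. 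Keeping track of precisely which orders occur, and of its interaction with the Weil-pairing cap, is the delicate bookkeeping where I would expect to spend most of the effort.
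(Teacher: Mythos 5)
The paper does not prove this lemma at all: it is quoted verbatim from R\"uck \cite[Theorem 3]{Ru87}, so there is no in-paper argument to compare yours against. Your outline is, in substance, the standard proof of that cited theorem: decompose $\ker(\pi-1)$ into $\ell$-primary parts, use $E[\ell^k]\cong(\mathbb{Z}/\ell^k\mathbb{Z})^2$ for $\ell\neq p$ and the cyclicity of $E[p^k]$ to obtain the displayed shape, use Galois-equivariance of the Weil pairing to force $\ell^{a_\ell}\mid q-1$, and invoke the Deuring--Waterhouse classification of endomorphism rings within an isogeny class \cite{Wa69} for realizability. Three remarks. First, a sign slip: in the paper's convention the class labelled $t$ has $h=q+1+t$ points, so the trace of Frobenius is $-t$ and $\pi^2+t\pi+q=0$; your $\pi^2-t\pi+q=0$ gives $h=q+1-t$. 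Second, your handling of cases (iii) and (iv) through ramification and a ``parity obstruction'' is vaguer than necessary and not actually carried out; cyclicity there already follows from the two bounds you have established. Indeed $a_\ell\ge 1$ forces $\ell\mid q-1$ and $\ell^2\mid h$. For $h=q\pm\sqrt q+1$ with $s=\sqrt q$ this gives $\ell\mid(\pm s+2)$ and $\ell\mid s^2-1$, hence $\ell=3$, and a direct computation shows $\nu_3(h)\le 1$, so $a_3=0$; for $h=q+1\pm p^{(n+1)/2}$ one multiplies by the conjugate to get $\ell\mid q^2+1$ (for $p=2$) or $\ell\mid q^2-q+1$ (for $p=3$), which together with $q\equiv 1\pmod\ell$ yields a contradiction. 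Third, the only genuinely incomplete step is the one you flag yourself: that every $a_\ell$ in the admissible range of case (b) is actually attained by some curve in the isogeny class. This is the real content of R\"uck's theorem and does not follow formally from Honda--Tate; it requires Waterhouse's determination of exactly which orders $\mathcal{O}$ with $\mathbb{Z}[\pi]\subseteq\mathcal{O}\subseteq\mathcal{O}_K$ occur as endomorphism rings, combined with the identification of $E(\F_q)$ with $\mathcal{O}/(\pi-1)\mathcal{O}$. As a plan your proposal is faithful to the source the paper cites; as written, that existence step remains a sketch rather than a proof.
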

	
If all rational places of an elliptic function field $E/\F_q$ form a cyclic group under the addition $\oplus$, then $E/\F_q$ is called a cyclic elliptic function field. 
In particular, we will employ this cyclic structure of the rational places of elliptic function fields with even characteristic to construct binary sequences as \cite[Lemma 5.3]{XKD03}. 

\begin{lemma}\label{lem:2.5}
		Let $q=2^n$. Let $t$ be an integer satisfying one of the following conditions:
		\begin{itemize}
			\item[(1)] $t$ is an odd integer between $-2\sqrt{q}$ and $2\sqrt{q}$;
			\item[(2)]  $t=0$;
			\item[(3)]  $t=\pm \sqrt{q}$ if $n$ is even, and $t=\pm \sqrt{2q}$ if $n$ is odd.
		\end{itemize}
Then there exists a cyclic elliptic function field $E$ over $\F_q$ has $q+1+t$ rational places. 
\end{lemma}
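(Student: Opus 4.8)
The plan is to prove the statement by treating the three conditions on $t$ separately, in each case first confirming that an elliptic function field over $\F_q$ with exactly $q+1+t$ rational places exists (via Lemma~\ref{lem:2.3}), and then confirming that at least one of the admissible group structures on $\mathbb{P}_E^1$ (via Lemma~\ref{lem:2.4}) is cyclic. Since $q=2^n$ we have $p=2$ throughout, and I will exploit that $q-1=2^n-1$ is odd and that $q\equiv 0$ or $2\pmod 4$, so in particular $q\not\equiv -1\pmod 4$.

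First, for the existence of the isogeny class I would match each $t$ to a condition in Lemma~\ref{lem:2.3}. In case (1), $t$ is odd, so $\gcd(t,2)=1$ and condition (i) applies. In case (2), $t=0$ and $p=2\not\equiv 1\pmod 4$, so condition (v) applies. In case (3) with $n$ even, $t=\pm\sqrt q=\pm 2^{n/2}$ and $p=2\not\equiv 1\pmod 3$, so condition (iii) applies; with $n$ odd, $t=\pm\sqrt{2q}=\pm 2^{(n+1)/2}=\pm p^{(n+1)/2}$, so condition (iv) applies. In each case Lemma~\ref{lem:2.3} guarantees an elliptic curve, equivalently an elliptic function field, with $q+1+t$ rational places.

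Next, for cyclicity I would read off the group structure from Lemma~\ref{lem:2.4} with $h=q+1+t$. In case (3), conditions (iii)/(iv) of Lemma~\ref{lem:2.3} force $\mathbb{P}_E^1\cong\mathbb{Z}/h\mathbb{Z}$, which is already cyclic, so nothing further is needed. In case (2), condition (v) always lists $\mathbb{Z}/(q+1)\mathbb{Z}$ among the possibilities --- and since $q\not\equiv -1\pmod 4$ this is in fact the only possibility --- so a cyclic realization exists. In case (1) we are in part (b) of Lemma~\ref{lem:2.4}, where for each odd prime $\ell\mid h$ the exponent $a_\ell$ may be chosen freely in the range $0\le a_\ell\le\min\{\nu_\ell(q-1),\lfloor h_\ell/2\rfloor\}$; choosing $a_\ell=0$ for every $\ell$ (which is always admissible) collapses each factor to $\mathbb{Z}/\ell^{h_\ell}\mathbb{Z}$ and yields the cyclic group $\mathbb{Z}/p^{h_p}\mathbb{Z}\times\prod_{\ell\neq p}\mathbb{Z}/\ell^{h_\ell}\mathbb{Z}\cong\mathbb{Z}/h\mathbb{Z}$. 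Combining, in every case a cyclic elliptic function field with $q+1+t$ rational places exists.

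The step I expect to require the most care is case (1): here I must read Lemma~\ref{lem:2.4} not merely as a constraint on which groups can occur but as an existence statement, namely that every admissible choice of the parameters $a_\ell$ is realized by some curve in the isogeny class. Granting this (which is exactly what the phrase ``all the possible groups $\mathbb{P}_E^1$ are the following'' asserts), the choice $a_\ell=0$ produces the desired cyclic field. I would also remark, to clarify the hypotheses, that the excluded case $t=\pm 2\sqrt q$ (condition (ii) of Lemma~\ref{lem:2.3}) is precisely the one where part (a) of Lemma~\ref{lem:2.4} forces $\mathbb{P}_E^1\cong\mathbb{Z}/(\sqrt q\pm 1)\mathbb{Z}\times\mathbb{Z}/(\sqrt q\pm 1)\mathbb{Z}$, which is never cyclic; this explains why $t$ is restricted as in (1)--(3).
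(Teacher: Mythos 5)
Your proof is correct: the paper itself gives no argument for this lemma (it is quoted from \cite[Lemma 5.3]{XKD03}), but your derivation is exactly the one the paper sets up by placing Lemmas \ref{lem:2.3} and \ref{lem:2.4} immediately before it --- match $t$ to a Waterhouse condition, then read off a cyclic group structure from R\"uck's classification, using that $p=2$ gives $p\not\equiv 1\pmod 3$, $p\not\equiv 1\pmod 4$, and $q\not\equiv -1\pmod 4$. Your flagged point (reading Lemma \ref{lem:2.4} as asserting that every admissible choice of the $a_\ell$ is realized, so $a_\ell=0$ yields a cyclic group in case (1)) is indeed the one load-bearing step, and it is the correct reading of R\"uck's theorem.
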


\subsection{Artin-Schreier extensions}
In this subsection, we summarize the main properties of Artin-Schreier extensions from \cite[Theorem 3.1.9]{NX01}.

\begin{prop}\label{prop:2.6}
Let $q=2^n$ be a prime power. 
Let $F/\F_q$ be the function field over $\F_q$ with genus $g$. Let $f$ be an element in $F$ such that $f\neq z^2+z$ for all $z\in F$.
Let \[F^\prime=F(y) \text{ with } y^2+y=f.\] For any place $P\in \mathbb{P}_F$, we define the integer $m_P$ by
$$
m_P=\begin{cases}
\ell &\text{ if } \exists  z\in F \text{ satisfying }  \nu_P(u-(z^2+z))=-\ell \\ & \text{ with } \ell\in \mathbb{Z}_+ \text{ and } \ell \not \equiv 0 (\text{mod }2),\\
-1 & \text{ if } \nu_P(u-(z^2+z))\ge 0 \text{ for some } z\in F.
\end{cases}$$
Then we have the following results. 
\begin{itemize}
\item[(a)] $F^\prime/F$ is a cyclic Galois extension of degree two. The automorphisms of $F^\prime/F$ are given by $\sigma(y)=y+u$ with $u=0,1.$
\item[(b)] $P$ is unramified in $F^\prime/F$ if and only if $m_P=-1$.
\item[(c)] $P$ is totally ramified in $F^\prime/F$  if and only if $m_P>0$. Denote by $P^\prime$ the unique place of $F^\prime$ lying over $P$. Then the different exponent $d(P^\prime|P)$ is given by \[d(P^\prime|P)=m_P+1.\]
\item[(d)] Assume that there is at least one place $Q$ satisfying $m_Q>0$, then the full constant field of $F^\prime$ is $\F_q$ and the genus of $F^\prime$ is
\[g(F^\prime)=2g-1+\frac{1}{2}\sum_{P\in \mathbb{P}_{F}} (m_P+1)\cdot \deg(P).\]
\end{itemize}
\end{prop}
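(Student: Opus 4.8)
The plan is to establish the four assertions in turn, using the structure theory of degree-two Artin--Schreier extensions in characteristic two.

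For part (a), observe that since $\F_q$ has characteristic two, the hypothesis that $f\neq z^2+z$ for all $z\in F$ says precisely that the polynomial $T^2+T-f$ has no root in $F$; being of degree two it is therefore irreducible, so $[F':F]=2$. If $y$ is one root, then $(y+1)^2+(y+1)=y^2+y=f$, so $y+1$ is the conjugate root. Hence $F'/F$ is separable and normal, i.e. Galois, with group $\{\mathrm{id},\sigma\}$, where $\sigma(y)=y+1$; writing the two automorphisms as $y\mapsto y+u$ for $u\in\{0,1\}=\F_2$ gives the stated description.

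For parts (b) and (c), the crucial observation is that replacing $y$ by $y+z$ for any $z\in F$ leaves $F'$ unchanged and replaces $f$ by $f+(z^2+z)$; thus only the class of $f$ modulo $\{z^2+z:z\in F\}$ matters, and at each place $P$ one may seek a representative minimizing the pole order at $P$. I would first show $m_P$ is well defined by a descent argument: if $\nu_P(f-(z^2+z))=-\ell$ with $\ell$ positive and even, choose a local uniformizer $t$ and write the leading term as $at^{-\ell}$ with $\overline a\neq 0$ in the residue field $k_P$; since $k_P$ is a finite field of characteristic two, hence perfect, there is $b\in F$ with $\overline b^{\,2}=\overline a$, and subtracting $w^2+w$ for $w=bt^{-\ell/2}$ strictly raises the valuation. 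Iterating, one reaches either a representative with $\nu_P\ge 0$, giving $m_P=-1$, or one whose minimal pole order $m_P$ is a positive odd integer. In the first case $P$ is unramified, proving (b). In the second case $\gcd(m_P,2)=1$, and the local theory of Artin--Schreier extensions (Hilbert's different formula applied to the higher ramification filtration) shows $P$ is totally ramified with different exponent $(p-1)(m_P+1)=m_P+1$, proving (c).

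For part (d), the existence of a place $Q$ with $m_Q>0$ forces at least one ramified place, so $F'/F$ cannot be a constant-field extension; hence the full constant field of $F'$ remains $\F_q$. The genus then follows from the Hurwitz genus formula $2g(F')-2=2(2g-2)+\deg\operatorname{Diff}(F'/F)$. By (b) and (c) one has $\operatorname{Diff}(F'/F)=\sum_{P}(m_P+1)P'$, where unramified places contribute $m_P+1=0$, so $\deg\operatorname{Diff}(F'/F)=\sum_{P\in\mathbb{P}_F}(m_P+1)\deg(P)$; substituting this and solving for $g(F')$ yields the displayed formula. The main obstacle is the local computation in (b)--(c): establishing that $m_P$ is well defined and independent of the chosen representative, and pinning down the different exponent at a wildly ramified place. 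The well-definedness rests on the perfectness of $k_P$ to extract square roots and kill even-order poles, while the different exponent requires either Hilbert's different formula together with the ramification groups of the local extension or, equivalently, a direct valuation computation for a uniformizer of $F'$; both are routine but delicate in the wild case.
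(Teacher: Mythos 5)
The paper gives no proof of this proposition at all---it is quoted from the literature (Niederreiter--Xing, Theorem 3.1.9; equivalently Stichtenoth's Proposition 3.7.8 specialized to $p=2$)---and your sketch correctly reproduces exactly that standard argument: irreducibility of $T^2+T-f$ for (a), the residue-field descent and Hilbert's different formula for (b)--(c), and ramification plus Hurwitz for (d). The only point left implicit is the uniqueness half of the well-definedness of $m_P$: your descent produces a representative with nonnegative or odd-negative valuation, but one must also note that two representatives differ by $(z_1+z_2)^2+(z_1+z_2)$, whose valuation at $P$ is either nonnegative or negative and even, so the two cases in the definition are mutually exclusive and the odd pole order is unique---a one-line addition.
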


\section{Binary sequences with a low correlation via elliptic curves}\label{sec:3}
Motivated by the construction of families of binary sequences with a low correlation via automorphism groups of cyclotomic function fields in \cite{HJMX24,JMX22}, 
we provide a new construction of binary sequences via elliptic curves.
Compared with the result in \cite{XKD03}, our construction achieves the same low correlation and high linear complexity but offers a larger family size.

\subsection{A new construction of binary sequences via elliptic curves}
Let $n$ be a positive integer and $q = 2^n$. Let $ \F_q$ be the finite field with $q$ elements. 
In this subsection, we shall employ the cyclic group structure of rational points and automorphism groups of elliptic curves over $\F_{2^n}$ to construct a new family of binary sequences with a low correlation and a large linear complexity. 
	
Let $E/\F_q$ be an elliptic function field defined over $\F_q$ with $q+1+t$ rational places. 
Let $\tau_P$ be the automorphism of $E$ induced by $\tau_P(Q)=P\oplus Q$ for any rational place $P$ and $Q$ of $E$. 
Let  $T_E=\{\tau_P: P\in \mathbb{P}^1_E\}$  be the translation group of $E$. Let $O$ be the infinity place of $O$ and $\Aut(E,O)$ be the automorphism group of $E$ fixing the infinity place $O$. 
From \cite[Theorem 3.1]{MX23}, the automorphism group of $E/\F_q$ is $\Aut(E/\F_q)\cong T_E \rtimes \Aut(E,O)$. 
Let $F$ be the fixed subfield of $E$ with respect to the translation group $T_E$, i.e.,
$$F=E^{T_E}=\{z\in E: \sigma(z)=z \text{ for any } \sigma\in T_E\}.$$
From Galois theory, $E/F$ is a Galois extension with Galois group $\Gal(E/F)\cong T_E$. 
Since the number of rational places of $E$ is $q+1+t$, there exists a rational place of $F$ which splits completely into $q+1+t$ rational places of $E$ from \cite[Theorem 3.1.12 and Theorem 3.7.2]{St09}. 

In particular, we assume that $t$ satisfies one of the three conditions given in Lemma \ref{lem:2.5}. 
Hence, there is a cyclic elliptic function field $E/\F_q$ with $q+1+t$ rational places, i.e., there exists a rational place $P$ such that the least positive integer $k$ satisfying $[k]P=O$ is $q+1+t$. 
Hence, the translation group $T_E$ of $E$ is a cyclic group of order $q + 1 + t$ with generator $\tau_P$. 
All rational places of $E$ inherits the cyclic group structure of  $T_E=\langle \tau_P\rangle$ as follows. 
Let $P_j=\tau_P^j(O)=[j]P$ be the rational place of $E$ for each $0\le j\le q+t$. 
By  \cite[Theorem 3.7.1]{St09}, the set $\{P_0,P_1,\cdots, P_{q+t}\}$ consists of all rational places of $E$.

\begin{prop}\label{prop:3.1}
Let $E/\F_q$ be a cyclic elliptic function field defined over $\F_q$ with $q+1+t$ rational places. Let  $T_E=\langle \tau_P\rangle$  be the translation group of $E$ and $F=E^{T_E}$. 
Let $d$ be a positive integer and $Q$ be a place of $E$ with degree $d$. If $\gcd(d,q+1+t)=1$, then $\tau_P^j(Q)$ are pairwise distinct places with degree $d$ of $E$ for all $0\le j\le q+t$.
\end{prop}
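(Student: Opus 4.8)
The plan is to prove distinctness by showing that the stabiliser of $Q$ in the cyclic group $T_E=\langle\tau_P\rangle$ is trivial. Write $N:=q+1+t=|T_E|$. By Lemma~\ref{lem:2.1}(1) each $\tau_P^j(Q)$ has degree $\deg(Q)=d$, so all $N$ places $\tau_P^0(Q),\dots,\tau_P^{q+t}(Q)$ have degree $d$ and the only thing at issue is that they are pairwise distinct. Since $T_E$ is cyclic of order $N$, this is equivalent to showing that no nontrivial power of $\tau_P$ fixes $Q$; that is, it suffices to rule out the existence of an integer $m$ with $1\le m\le q+t$ and $\tau_P^m(Q)=Q$.

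Before attacking this I would record two facts. First, under the identification $\mathbb{P}_E^1\cong\operatorname{Cl}^0(E)$ the place $P$ generates the cyclic group $(\mathbb{P}_E^1,\oplus)$ of order $N$, so $[k]P=O$ holds if and only if $N\mid k$. Second, translation induces the \emph{identity} on $\operatorname{Cl}^0(E)$, since $\tau_P(R-O)=(P\oplus R)-P\sim R-O$ for every rational place $R$. This second observation is exactly the obstacle: the degree-zero class of $Q$ is invariant under $\tau_P$, so no class-group invariant can separate the orbit, and the argument must bring in the degree-$d$ structure of $Q$ explicitly rather than rely on $\operatorname{Cl}^0(E)$.

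To exploit that structure I would pass to the constant field extension $E\cdot\F_{q^d}$. A place of exact degree $d$ splits completely there into $d$ rational places of $E$ over $\F_{q^d}$, which form a single orbit $\{R,\phi(R),\dots,\phi^{d-1}(R)\}$ under the $q$-power Frobenius $\phi$, where $R\in E(\F_{q^d})$ has exact degree $d$. Because $P\in E(\F_q)$ is Frobenius-fixed, $\tau_P^m$ acts on $E(\F_{q^d})$ by $R\mapsto R\oplus[m]P$ and commutes with $\phi$ (indeed $\phi(R\oplus[m]P)=\phi(R)\oplus[m]P$). Consequently the two Frobenius orbits attached to $Q$ and to $\tau_P^m(Q)$ are either disjoint or equal, and $\tau_P^m(Q)=Q$ is equivalent to their sharing a single point, i.e. to $R\oplus[m]P=\phi^{i}(R)$ for some $0\le i\le d-1$.

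The key step is then a short iteration. Applying $\phi^i$ and using $\phi^i([m]P)=[m]P$ turns $R\oplus[m]P=\phi^i(R)$ into $R\oplus[2m]P=\phi^{2i}(R)$, and inductively into $R\oplus[km]P=\phi^{ki}(R)$ for every $k\ge 1$. Taking $k=d$ and invoking $\phi^{d}(R)=R$ yields $[dm]P=O$, hence $N\mid dm$; here the hypothesis $\gcd(d,N)=1$ forces $N\mid m$, contradicting $1\le m\le q+t=N-1$. I expect the only delicate points to be bookkeeping ones, namely verifying that $Q$ splits into a full Frobenius orbit of size exactly $d$ in the constant field extension and that orbit-coincidence is detected by a single shared point; the genuine content is the interaction between translation by the rational point $P$ and the Frobenius defining $Q$, which the coprimality assumption converts into the divisibility contradiction.
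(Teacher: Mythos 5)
Your proof is correct, but it takes a genuinely different route from the paper's. The paper's argument is a one\-/liner once one quotes the classical fact (Eichler, restated as Lemma~5.1 of \cite{XKD03}) that $\tau_{[j]P}(Q)+O-Q-[dj]P$ is principal for every place $Q$ of degree $d$: then $\tau_P^j(Q)=Q$ forces $[dj]P=O$ by Lemma~\ref{lem:2.2}, and $\gcd(d,q+1+t)=1$ gives $(q+1+t)\mid j$. You arrive at the same divisibility $[dm]P=O$ without citing that lemma, by identifying $Q$ with a size-$d$ Frobenius orbit in $E(\F_{q^d})$, using that translation by the $\F_q$-rational point $P$ commutes with Frobenius, and iterating $R\oplus[m]P=\phi^i(R)$ exactly $d$ times. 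What your approach buys is self-containedness (the divisor-class identity is re-derived geometrically); what it costs is the standard but nontrivial bookkeeping you flag yourself, namely the dictionary between degree-$d$ places and Frobenius orbits and the compatibility of $\oplus$ with constant field extension. One substantive remark: your motivating claim that the triviality of $\tau_P$ on $\operatorname{Cl}^0(E)$ is an ``obstacle'' because ``no class-group invariant can separate the orbit'' is not right. Precisely because $\tau_P$ acts trivially on $\operatorname{Cl}^0(E)$ while $\tau_P(O)=P\neq O$, one gets $[\tau_P(Q)-dO]=[Q-dO]+d\,[P-O]$, so the class-group invariant $Q'\mapsto[Q'-dO]$ does separate the orbit exactly when $d\,[P-O]$ has order $q+1+t$ --- which is the coprimality hypothesis, and is exactly the computation the paper performs. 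So the shorter argument you ruled out is in fact available; your Frobenius argument is a valid and more elementary substitute for it.
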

\begin{proof}
From \cite[Lemma 5.1]{XKD03} or \cite{E51}, for any $j\in \mathbb{N}$, there exists a unique automorphism $\tau_P^j=\tau_{[j]P}$ of $\Aut(E/\F_q)$ such that 
$$\tau_{[j]P}(Q)+O-Q-[d][j]P$$
is a principal divisor of $E$ for any place $Q$ of $E$ with degree $d$. 
If $\tau_{[j]P}(Q)=Q$, then we have $[dj]P-O$ is a principal divisor.  By Lemma \ref{lem:2.2}, we have $(q+1+t)|dj$ for some $j$. 
Since $\gcd(d,q+1+t)=1$, we have $(q+1+t)|j$. Hence, the decomposition group of $Q|Q\cap F$ is the identity automorphism of $E$. 
Thus, $\tau_P^j(Q)$ are pairwise distinct places  with degree $d$ of $E$ for all $0\le j\le q+t$
 by \cite[Corollary 3.7.2]{St09}. 
\end{proof} 

\begin{lemma}\label{lem:3.2}
Let $E/\F_q$ be an elliptic function field defined over $\F_q$ with $q+1+t$ rational places. Let $B_d$ be the number of places with degree $d$ of $E$. Let $\mu(\cdot)$ denote the M$\ddot{o}$bius function.
Then one has $$B_d=\frac{1}{d}\cdot \sum_{r|d} \mu\left(\frac{d}{r}\right) \cdot \left(q^r+1-\sum_{i=0}^{\lfloor r/2\rfloor} (-1)^{r-i}\frac{(r-i-1)!\cdot r}{(r-2i)!\cdot i!}t^{r-2i}q^i\right).$$ 
 In particular, we have $$B_2=\frac{q^2+q-t^2-t}{2} \text{ and } B_3=\frac{q^3-q+t^3-3qt-t}{3} .$$
\end{lemma}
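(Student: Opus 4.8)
The plan is to reduce the computation of $B_d$ to counting $\F_{q^r}$-rational points on $E$ for $r \mid d$, and to evaluate the latter through the zeta function of the elliptic function field. Write the $L$-polynomial of $E/\F_q$ as $L(T)=(1-\omega_1 T)(1-\omega_2 T)$, where $\omega_1,\omega_2$ are the Frobenius eigenvalues, normalized so that $\omega_1\omega_2=q$ and $\omega_1+\omega_2=-t$; the latter normalization is forced by $N(E)=q+1+t$ together with the standard identity $N_r=q^r+1-(\omega_1^r+\omega_2^r)$ for the number $N_r$ of degree-one places of the constant field extension $E\cdot\F_{q^r}$ (see \cite[Chapter 5]{St09}). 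The first step is thus to obtain a closed form for the power sum $s_r:=\omega_1^r+\omega_2^r$ purely in terms of the elementary symmetric data $e_1=\omega_1+\omega_2=-t$ and $e_2=\omega_1\omega_2=q$.

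For this I would invoke Waring's formula expressing power sums in elementary symmetric polynomials, namely
\[
s_r=\sum_{i=0}^{\lfloor r/2\rfloor}(-1)^i\,\frac{r}{r-i}\binom{r-i}{i}\,e_1^{\,r-2i}e_2^{\,i}.
\]
Substituting $e_1=-t$ and $e_2=q$, and using $(-t)^{r-2i}=(-1)^r t^{r-2i}$ together with the elementary rewriting
\[
\frac{r}{r-i}\binom{r-i}{i}=\frac{(r-i-1)!\cdot r}{(r-2i)!\cdot i!},
\]
collapses the sign to $(-1)^{r+i}=(-1)^{r-i}$ and yields exactly the bracketed expression appearing in the statement. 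Consequently the parenthesized quantity in the formula for $B_d$ is precisely $N_r$, the number of $\F_{q^r}$-rational points of $E$. I expect this coefficient-and-sign bookkeeping to be the main obstacle: the content is routine, but matching the factorial coefficient and tracking the parity of $(-1)^{r-2i}$ correctly is where errors would creep in, so I would verify it against small values of $r$.

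The second step is the passage from rational point counts back to degree-$d$ place counts. Each place of degree $d'$ with $d'\mid r$ contributes exactly $d'$ degree-one places in $E\cdot\F_{q^r}$, while places whose degree does not divide $r$ contribute none; hence $N_r=\sum_{d'\mid r} d'\,B_{d'}$. Applying Möbius inversion to this relation gives
\[
B_d=\frac{1}{d}\sum_{r\mid d}\mu\!\left(\frac{d}{r}\right)N_r,
\]
and inserting the explicit expression for $N_r$ obtained above produces the claimed general formula verbatim.

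Finally, for the special cases I would simply specialize. For $d=2$ one has $B_2=\tfrac12(N_2-N_1)$ with $N_1=q+1+t$ and $s_2=e_1^2-2e_2=t^2-2q$, so $N_2=q^2+2q+1-t^2$, giving $B_2=\tfrac12(q^2+q-t^2-t)$. For $d=3$ one has $B_3=\tfrac13(N_3-N_1)$, and the recurrence $s_3=e_1 s_2-e_2 s_1=-t^3+3qt$ yields $N_3=q^3+1+t^3-3qt$, whence $B_3=\tfrac13(q^3-q+t^3-3qt-t)$, matching both stated values.
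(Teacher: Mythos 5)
Your proposal is correct and follows essentially the same route as the paper: both identify the bracketed quantity as $N_r=q^r+1-S_r$ via the reciprocal roots of the $L$-polynomial $1+tX+qX^2$, evaluate the power sums $S_r$ by Waring's formula with $e_1=-t$, $e_2=q$, and recover $B_d$ by M\"obius inversion (which the paper cites directly as \cite[Proposition 5.2.9]{St09} rather than deriving from $N_r=\sum_{d'\mid r}d'B_{d'}$). The sign and coefficient bookkeeping and the two special cases all check out.
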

\begin{proof}
By \cite[Theorem 5.1.15]{St09}, the $L$-polynomial of $E$ is given by 
$$L_E(X)=1+[N(E)-(q+1)]X+qX^2=1+tX+qX^2.$$
Assume that $L_E(X)$ factorizes in $\mathbb{C}[X]$ as $$L_E(X)=(1-\alpha_1 X)(1-\alpha_2 X).$$
Thus, we obtain $\alpha_1+\alpha_2=-t$ and $\alpha_1\alpha_2=q$. 
Let $S_r$ be the sum $\alpha_1^r+\alpha_2^r$. 
By \cite[Proposition 5.2.9]{St09}, for all $d\ge 2$, we have 
$$B_d=\frac{1}{d}\cdot \sum_{r|d} \mu\left(\frac{d}{r}\right) (q^r+1-S_r).$$ 
From Waring's Formula \cite[Theorem 1.76]{LN83}, we have
$$S_r=\sum_{i=0}^{\lfloor r/2\rfloor} (-1)^{r-i}\frac{(r-i-1)!\cdot r}{(r-2i)!\cdot i!}t^{r-2i}q^i.$$
This completes the proof. 
\end{proof}

Let $E$ be a cyclic elliptic function field with $q+1+t$ rational places.
Let $Q$ be a place of $E$ with $\deg(Q)=d\ge 2$ such that $\gcd(d,q+1+t)=1$. 
The existence of such a place $Q$ can be guaranteed by Lemma \ref{lem:3.2}. 
Since $\deg(Q)=d\ge 2g(E)-1=1$, the dimension of the Riemann-Roch space $\mL(Q)$ over $\F_q$ is $\dim \mL(Q)=\deg(Q)+1-g(E)=\deg(Q)=d$. 
It is clear that $\F_q=\mL(0)\subseteq \mL(Q)$. Hence, there exists a vector space $V$ over $\F_q$ of dimension $d-1$ such that $\mL(Q)=\F_q\oplus V$, which is the complementary dual space of $\F_q$ in $\mL(Q)$.	

Let $\Tr$ be the absolute trace map from $\F_{q}$ to $\F_2$ which is defined by $\text{Tr}(\Ga)=\sum_{i=0}^{n-1} \Ga^{2^{i}}=\Ga+\Ga^2+\cdots+\Ga^{2^{n-1}}$ for any $\Ga\in \F_q$. Since $\dim_{\F_q}(V) = d-1$, the set $V \setminus \{0\} $ can be listed as $\{z_1, z_2, \dots, z_{q^{d-1} - 1}\}. $
For any nonzero element $z_i\in V\setminus \{0\}$ with $1\le i\le q^{d-1}-1$, a binary sequence $\bs_i$ associated to $z_i$ can be defined as follows:
$$\bs_i=(s_{i,0},s_{i,1},\cdots,s_{i,q+t}) \text{ with } s_{i,j}=\text{Tr}(z_i(P_j)) \text{ for } 0\le j\le q+t.$$
It is clear that each sequence $\bs_i$ has length $q+1+t$ and the family of binary sequences $\mS=\{\bs_i: 1\le i\le q^{d-1}-1\}$ has size $q^{d-1}-1$. 
We are going to show that this family of binary sequences  $\mS$ has a low correlation and a large linear complexity.

\subsection{Binary sequences with a low correlation via elliptic curves}
In this subsection, we shall show that the family of binary sequences $\mS=\{\bs_i: 1\le i\le q^{d-1}-1\}$ constructed as above has a low correlation. 

\begin{prop} \label{prop:3.3}
For any elements $z_1,z_2\in V\setminus \{0\}$, one has
\begin{itemize}
       \item[(1)] $(z_1+\tau(z_2))_\infty=Q+\tau(Q)$ for any automorphism $\tau\in \Gal(E/F)\setminus \{\rm id\}$; 	
       \item[(2)] $(z_1 + z_2)_\infty = Q$, provided that $z_1\neq z_2$. 
\end{itemize}
\end{prop}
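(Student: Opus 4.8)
The plan is to first determine the pole divisor of a single element of $V\setminus\{0\}$, and then establish both identities by tracking poles place by place, the only real work being to rule out cancellation of poles.

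\textbf{Step 1 (pole divisor of one generator).} I would begin by showing that every $z\in V\setminus\{0\}$ has $(z)_\infty=Q$, equivalently $\nu_Q(z)=-1$. Since $z\in\mL(Q)$ we have $(z)+Q\ge 0$, so $(z)_\infty\le Q$; as $Q$ is a prime place, the only effective divisors bounded by $Q$ are $0$ and $Q$. If $(z)_\infty=0$, then $z$ has no pole and hence $z\in\mL(0)=\F_q$. But $\mL(Q)=\F_q\oplus V$ gives $V\cap\F_q=\{0\}$, so $z=0$, a contradiction. Therefore $(z)_\infty=Q$ and $\nu_Q(z)=-1$ for every nonzero $z\in V$.

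\textbf{Step 2 (part (1)).} Fix $\tau\in\Gal(E/F)\setminus\{\mathrm{id}\}$. By Lemma \ref{lem:2.1}(2), $\nu_{\tau(P)}(\tau(z_2))=\nu_P(z_2)$ for every place $P$, so $\tau$ carries the unique simple pole $Q$ of $z_2$ to a unique simple pole $\tau(Q)$ of $\tau(z_2)$; that is, $(\tau(z_2))_\infty=\tau(Q)$, again a prime place of degree $d$. Because $\gcd(d,q+1+t)=1$, Proposition \ref{prop:3.1} shows the places $\tau_P^{\,j}(Q)$ are pairwise distinct for $0\le j\le q+t$, so in particular $\tau(Q)\ne Q$. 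I would then compute the valuation of $z_1+\tau(z_2)$ at each place: at $Q$ we have $\nu_Q(z_1)=-1<0\le\nu_Q(\tau(z_2))$, so by the standard valuation identity (equality when the two valuations differ) $\nu_Q(z_1+\tau(z_2))=-1$; symmetrically $\nu_{\tau(Q)}(z_1+\tau(z_2))=-1$; and at every other place both summands are regular, hence so is the sum. This yields $(z_1+\tau(z_2))_\infty=Q+\tau(Q)$.

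\textbf{Step 3 (part (2)), and the main obstacle.} Here both $z_1$ and $z_2$ have pole divisor $Q$ by Step 1, so away from $Q$ the sum $z_1+z_2$ is regular, and the only possible place of the sum is $Q$ itself. The crux—and the one genuinely delicate point—is to rule out cancellation of the pole at $Q$, which for a place of degree $d\ge 2$ is not a transparent leading-coefficient computation as it would be in the degree-one (rational) case. I would argue by contradiction: if $\nu_Q(z_1+z_2)\ge 0$, then $z_1+z_2\in\mL(Q)$ is pole-free, so $z_1+z_2\in\mL(0)=\F_q$; but $V$ is a subspace, so $z_1+z_2\in V$, and $V\cap\F_q=\{0\}$ forces $z_1+z_2=0$, i.e.\ $z_1=z_2$ in characteristic two, contradicting the hypothesis $z_1\ne z_2$. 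Hence $\nu_Q(z_1+z_2)=-1$ and $(z_1+z_2)_\infty=Q$. This is precisely where the choice of $V$ as the complement of $\F_q$ inside $\mL(Q)$ does the work: the subspace property together with $V\cap\F_q=\{0\}$ replaces any local residue argument and guarantees the pole at $Q$ survives whenever $z_1\ne z_2$.
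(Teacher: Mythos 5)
Your proposal is correct and follows essentially the same route as the paper's proof: establish $(z)_\infty=Q$ for nonzero $z\in V$ from the decomposition $\mL(Q)=\F_q\oplus V$, invoke Lemma \ref{lem:2.1} and Proposition \ref{prop:3.1} to get $(\tau(z_2))_\infty=\tau(Q)\neq Q$, apply the Strict Triangle Inequality for part (1), and use $V\cap\F_q=\{0\}$ to rule out pole cancellation in part (2). Your write-up is somewhat more explicit than the paper's (notably in justifying $(z)_\infty=Q$ place by place), but there is no substantive difference in method.
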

\begin{proof}
For any elements  $z_1,z_2\in V\setminus \{0\}$, the pole divisors of $z_1$ and $z_2$ are given by $(z_1)_\infty=(z_2)_\infty=Q.$
By Lemma \ref{lem:2.1}, we have $\nu_{\tau(Q)}(\tau(z_2))=\nu_Q(z_2)=-1$ and $(\tau(z_2))_\infty=\tau(Q).$
It follows that $z_1+\tau(z_2)\in \mL(Q+\tau(Q)).$ 
For any automorphism $\tau\in \Gal(E/F)\setminus \{\rm id\}$, we have $\tau(Q)\neq Q$ by Proposition \ref{prop:3.1}. 
Hence, we have $$(z_1+\tau(z_2))_\infty=Q+\tau(Q)$$ from the Strict Triangle Inequality \cite[Lemma 1.1.11]{St09}.
		
Suppose that $z_ 1+ z_2 \in \F_q$. It follows that $z_1 + z_2 \in  V \cap \F_q=\{0\}$, i.e., $z_1 =   z_2$ which is a contradiction. Thus, we obtain $(z_1 + z_2 )_ \infty = Q$. 
\end{proof}

\begin{theorem}\label{thm:3.4}
Let $q=2^n$ be a prime power of $2$ for a positive integer $n$. Let $E/\F_q$ be a cyclic elliptic function field defined over $\F_q$ with $q+1+t$ rational places.
Let $F$ be the fixed subfield of $E$ with respect to its translation group $T_E$ generated by $\tau_P$. 
Let $Q$ be a place of $E$ with $\deg(Q)=d\ge 2$ such that $\gcd(d,q+1+t)=1$.
Then there exists a family of binary sequences $\mS=\{\bs_i: 1\le i\le q^{d-1}-1\}$ with length $q+1+t$, size $q^{d-1}-1$ and correlation upper bounded by $$\operatorname{Cor}(\mS)\le (2d+1)\cdot \lfloor 2\sqrt{q}\rfloor+|t|.$$
\end{theorem}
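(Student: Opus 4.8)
The plan is to reduce the computation of both the autocorrelation and the cross-correlation to counting rational places on a family of Artin--Schreier extensions of $E$, and then to control their genera via Proposition \ref{prop:3.3} and Proposition \ref{prop:2.6}, finishing with the Serre bound. First I would unify the two correlation quantities. For $\bs_i,\bs_k\in\mS$ and a delay $u$, write $C_u(\bs_i,\bs_k)=\sum_{j=0}^{q+t}(-1)^{s_{i,j}+s_{k,j+u}}$, which specializes to the autocorrelation $A_u(\bs_i)$ when $k=i$. Since $P_{j+u}=\tau_P^{u}(P_j)$ and each $z_k$ is regular at every rational place (its only pole $Q$ has degree $d\ge 2$), Lemma \ref{lem:2.1}(3) yields $z_k(P_{j+u})=\big(\tau_P^{-u}(z_k)\big)(P_j)$. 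Using additivity of evaluation at the regular place $P_j$ and of the trace, I obtain $s_{i,j}+s_{k,j+u}=\Tr\big(w(P_j)\big)$ with $w:=z_i+\tau_P^{-u}(z_k)$, hence $C_u(\bs_i,\bs_k)=\sum_{j=0}^{q+t}(-1)^{\Tr(w(P_j))}$.

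Next I would interpret this character sum geometrically via the Artin--Schreier extension $E'=E(y)$ with $y^2+y=w$. By Proposition \ref{prop:3.3} the function $w$ has a pole of order one (hence of odd order) at $Q$, so $w\neq z^2+z$ for every $z\in E$ and $E'/E$ is a genuine quadratic extension with full constant field $\F_q$. For each rational place $P_j$, where $w$ is regular, $P_j$ splits in $E'$ exactly when $\Tr(w(P_j))=0$ and is inert otherwise, so the number of rational places of $E'$ above $P_j$ equals $1+(-1)^{\Tr(w(P_j))}$. Because the only poles of $w$ are $Q$ and $\tau_P^{-u}(Q)$, both of degree $d\ge 2$, they are totally ramified (Proposition \ref{prop:2.6}(c)) and carry no rational place of $E'$; consequently every rational place of $E'$ lies above some $P_j$, and summing gives $N(E')=(q+1+t)+C_u(\bs_i,\bs_k)$, that is, $C_u(\bs_i,\bs_k)=N(E')-(q+1)-t$.

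Finally I would bound the genus and invoke Serre. Proposition \ref{prop:3.3} pins down the pole divisor of $w$: it is $Q+\tau_P^{-u}(Q)$ whenever $\tau_P^{-u}\neq\mathrm{id}$ (namely all autocorrelation delays $1\le u\le q+t$ and all cross-correlation delays with $u\neq 0$), and it is $Q$ in the remaining cross-correlation case $u=0$ with $z_i\neq z_k$, where choosing $z_i,z_k$ from the complementary space $V$ guarantees $z_i+z_k\notin\F_q$ so that $w$ is non-constant. Substituting $m_Q=m_{\tau_P^{-u}(Q)}=1$ and $g(E)=1$ into the genus formula of Proposition \ref{prop:2.6}(d) gives $g(E')=2d+1$ in the first case and $g(E')=d+1\le 2d+1$ in the second. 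The Serre bound $|N(E')-(q+1)|\le g(E')\lfloor 2\sqrt q\rfloor$ then yields $|C_u(\bs_i,\bs_k)|\le (2d+1)\lfloor 2\sqrt q\rfloor+|t|$, and taking the maximum over all $i,k,u$ gives the stated bound on $\operatorname{Cor}(\mS)$.

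The main obstacle is the bookkeeping in the second step: one must argue carefully that every rational place of $E'$ sits above one of the $P_j$ and that the degree-$d$ poles of $w$ contribute no rational place, so that the character sum equals exactly $N(E')-(q+1+t)$ with no stray correction terms. The shift identity $z_k(P_{j+u})=\big(\tau_P^{-u}(z_k)\big)(P_j)$ (which converts the delay into an automorphism acting on the function rather than on the place) and the non-constancy of $w$ (which is precisely why the construction uses the complementary dual space $V$ rather than all of $\mL(Q)$, ruling out constant field extensions that would inflate $N(E')$) are the other points that require care.
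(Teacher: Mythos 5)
Your proposal is correct and follows essentially the same route as the paper's proof: the shift identity from Lemma \ref{lem:2.1}, reduction to counting rational places on the Artin--Schreier extension $y^2+y=z_i+\tau_P^{-u}(z_k)$, genus computation via Propositions \ref{prop:3.3} and \ref{prop:2.6} (with $g=2d+1$ for $u\neq 0$ and $g=d+1$ for the $u=0$ cross-correlation case), and the Serre bound. The only difference is presentational: you unify auto- and cross-correlation into one character sum and write the place count as $\sum_j\bigl(1+(-1)^{\Tr(w(P_j))}\bigr)$, whereas the paper treats the two cases separately via the split/inert counts $N_0,N_1$; the substance is identical.
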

\begin{proof}
                 Let $\sigma=\tau_P$ be the generator of the translation group $T_E$. 
		We begin by computing the autocorrelation of the family of binary sequences $\mS=\{\bs_i: 1\le i\le q^{d-1}-1\}$. 
		For each $1\le i\le q^{d-1}-1$, the autocorrelation of $\bs_i$ at delay $u$ with $1\le u\le q+t$ is given by
		\begin{align*}
		A_u(\bs_i)&=  \sum_{j=0}^{q+t} (-1)^{s_{i,j}+s_{i,j+u}} = \sum_{j=0}^{q+t}(-1)^{\Tr(z_i(P_j))+\Tr(z_i(P_{j+u}))} \\ 
		&= \sum_{j=0}^{q+t} (-1)^{\Tr(z_i(P_j))+\Tr((\sigma^{-u}(z_i))(P_j))}=\sum_{j=0}^{q+t} (-1)^{\Tr((z_i+\sigma^{-u}(z_i))(P_j))}.
		\end{align*}
		The third equality follows from Lemma \ref{lem:2.1}.
		Since $z_i\in V\setminus \{0\}$ and $1\le u\le q+t$, we have $(z_i+\sigma^{-u}(z_i))_\infty=Q+\sigma^{-u}(Q)$ by Proposition \ref{prop:3.3}.
		Consider the Artin-Schreier curve $E_i=E(y)$ defined by $$y^2+y=z_i+\sigma^{-u}(z_i).$$
		By Proposition \ref{prop:2.6}, the place $Q$ and $\Gs^{-u}(Q)$ are the only ramified places in $E_i/E$ and their different exponents are two.
		From the Hurwitz genus formula, we have $$2g(E_i)-2=2[2g(E)-2]+2\deg(Q)+2\deg(\Gs^{-u}(Q)).$$
		Hence, the genus of $E_i$ is $g(E_i)=2\deg(Q)+1=2d+1$ for each $1\le i\le q+t$.
		Let $N_0$ denote the cardinality of the set $S_0=\{0\le j\le q+t: \Tr((z_i+\sigma^{-t}(z_i))(P_{j}))=0\}$.
		Let $N_1$ denote the cardinality of the set $S_1=\{0\le j\le q+t: \Tr((z_i+\sigma^{-t}(z_i))(P_{j}))=1\}$.
		It is clear that $$N_0+N_1=q+1+t.$$
		If $j\in S_0$, then $P_j$ splits into two rational places in $E_i$ by \cite[Theorem 2.25]{LN83} and \cite[Theorem 3.3.7]{St09}.
		If $j\in S_1$, then $P_j$ is inert in $E_i$ and any place of $E_i$ lying over $P_j$ has degree two.
		Let $N(E_i)$ be the number of rational points of the Artin-Schreier curve $E_i$. Then we have 
		$$N(E_i)=2N_0.$$
		From the Serre bound, we have $|N(E_i)-q-1|\le g(E_i)\cdot  \lfloor 2\sqrt{q}\rfloor=(2d+1)\cdot  \lfloor 2\sqrt{q}\rfloor.$  
		Hence, we obtain $$|A_u(\bs_i)|=|N_0-N_1|=|2N_0-q-1-t|=|N(E_i)-q-1-t|\le (2d+1)\cdot  \lfloor 2\sqrt{q}\rfloor +|t|.$$
		
		Now let us consider the cross-correlation of the family of sequences. 
		For two distinct sequences $\bs_i$ and $\bs_j$ in $\mS$ for $1\le i\neq j\le q^{d-1}-1$, the cross-correlation of $\bs_i$ and $\bs_j$ at delay $u$ with $0\le u\le q+t$ is given by
		$$C_u(\bs_i,\bs_j)=\sum_{k=0}^{q+t} (-1)^{\Tr(z_i(P_k))+\Tr(z_j(P_{k+u}))}=\sum_{k=0}^{q+t}(-1)^{\Tr((z_i+\sigma^{-u}(z_j))(P_{k}))}.$$
		Consider the Artin-Schreier curve $E_{i,j}=E(y)$ defined by $$y^2+y=z_i+\sigma^{-u}(z_j).$$
		If $1\le u\le q+t$, then we have $(z_i+\sigma^{-u}(z_j))_\infty=Q+\sigma^{-u}(Q)$ by Proposition \ref{prop:3.3}.
		From the Hurwitz genus formula and Proposition \ref{prop:2.6}, we have $$2g(E_{i,j})-2=2[2g(E)-2]+2\deg(Q)+2\deg(\Gs^{-u}(Q)).$$
		Hence, the genus of $E_{i,j}$ is $g(E_{i,j})=2\deg(Q)+1=2d+1$ for each $1\le i\le q-1$.
		If $u=0$, then we have $(z_i+\sigma^{-u}(z_j))_\infty=Q$ by Proposition \ref{prop:3.3} and the genus of $E_{i,j}$ is $g(E_{i,j})=\deg(Q)+1=d+1$ by Proposition \ref{prop:2.6}.
		Let $N_0^\prime$ denote the cardinality of the set $\{0\le k\le q+t: \Tr((z_i+\sigma^{-u}(z_j))(P_k))=0\}$.
		Let $N_1^\prime$ denote the cardinality of the set $\{0\le k\le q+t: \Tr((z_i+\sigma^{-u}(z_j))(P_k))=1\}$.
		It is clear that $$N_0^\prime+N_1^\prime=q+1+t.$$
		Let $N(E_{i,j})$ be the number of rational points on the Artin-Schreier curve $E_{i,j}$. Similarly, we have
		$$N(E_{i,j})=2N_0^\prime.$$
		From the Serre bound, we have
		$$|C_u(\bs_i,\bs_j)|=|N_0^\prime-N_1^\prime|=|2N_0^\prime -q-1-t|=|N(E_{i,j})-q-1-t|\le (2d+1)\cdot \lfloor 2\sqrt{q}\rfloor+|t|.$$
		The proof is completed by taking the maximum of $|A_u (\mathbf s_j)|$ and $|C_u(\mathbf s_j, \mathbf s_k)|$ among all possible $j,k$. 
\end{proof}

\begin{cor}\label{cor:3.5}
Let $q=2^n$ be a prime power for a positive integer $n$. 
Let $t$ be $0$, or $\pm \sqrt{q}$ if $n$ is even, or $\pm \sqrt{2q}$ if $n$ is odd.
Then there exists a family of binary sequences with length $q+1+t$, size $q-1$ and correlation upper bounded by
$$\operatorname{Cor}(\mS)\le 5\cdot \lfloor 2\sqrt{q}\rfloor+|t|.$$
\end{cor}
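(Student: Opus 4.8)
The plan is to deduce this corollary as the special case $d=2$ of Theorem~\ref{thm:3.4}, so that the only real work is verifying that the three hypotheses of that theorem hold for the listed values of $t$ and that they yield the claimed parameters.

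First I would confirm the existence of the underlying cyclic elliptic function field. The values prescribed for $t$ --- namely $t=0$, or $t=\pm\sqrt q$ when $n$ is even, or $t=\pm\sqrt{2q}$ when $n$ is odd --- are exactly conditions $(2)$ and $(3)$ of Lemma~\ref{lem:2.5}. Hence Lemma~\ref{lem:2.5} guarantees a cyclic elliptic function field $E/\F_q$ having $q+1+t$ rational places, whose translation group $T_E$ is cyclic of order $q+1+t$, precisely as required by Theorem~\ref{thm:3.4}.

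Next I would verify the coprimality hypothesis $\gcd(d,q+1+t)=1$ with $d=2$, together with the existence of a place of degree $2$. Since $q=2^n$ is even, $q+1$ is odd; and each admissible $t$ is even, because $t=0$, $\sqrt q=2^{n/2}$ (for $n$ even), and $\sqrt{2q}=2^{(n+1)/2}$ (for $n$ odd) are all even. Therefore $q+1+t$ is odd and $\gcd(2,q+1+t)=1$. For the existence of a place $Q$ with $\deg(Q)=2$, I would invoke the count $B_2=\tfrac12(q^2+q-t^2-t)$ from Lemma~\ref{lem:3.2}; using the Hasse--Weil bound $|t|\le 2\sqrt q$ one readily checks $q^2+q-t^2-t>0$, so $B_2\ge 1$ and such a $Q$ exists.

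With all hypotheses in place, Theorem~\ref{thm:3.4} applied to this $E$, $Q$, and $d=2$ yields a family $\mS$ of binary sequences of length $q+1+t$ and size $q^{\,d-1}-1=q-1$, with $\operatorname{Cor}(\mS)\le(2d+1)\lfloor 2\sqrt q\rfloor+|t|=5\lfloor 2\sqrt q\rfloor+|t|$, which is exactly the assertion. The only point demanding any care, and hence the main (if minor) obstacle, is confirming the positivity $B_2>0$ so that a degree-$2$ place genuinely exists; once that is settled, the result is a direct substitution of $d=2$ into the conclusion of Theorem~\ref{thm:3.4}.
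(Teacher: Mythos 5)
Your proposal is correct and follows essentially the same route as the paper: invoke Lemma~\ref{lem:2.5} for the cyclic elliptic function field, Lemma~\ref{lem:3.2} for a place of degree $2$, note $\gcd(2,q+1+t)=1$, and specialize Theorem~\ref{thm:3.4} to $d=2$. Your added checks (parity of $t$ and positivity of $B_2$, which for the admissible $t$ satisfies $|t|\le\sqrt{2q}$ and hence $B_2\ge\frac{1}{2}(q^2-q-\sqrt{2q})>0$) merely make explicit what the paper leaves implicit.
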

	
\begin{proof}
		Let $E$ be a cyclic elliptic function field with $q+1+t$ rational places. 
		By Lemma \ref{lem:3.2}, there exists a place $Q$ of $E$ of degree $\deg(Q)=2$. Note that $\gcd(2,q+1+t)=1$. Thus, 
		this corollary follows from Proposition \ref{prop:3.1},  Proposition \ref{prop:3.3} and Theorem \ref{thm:3.4}. 
\end{proof}

\begin{cor}\label{cor:3.6}
Let $q=2^n$ be a prime power for a positive integer $n$. 
Let $t$ be an integer satisfying one of three items in Lemma \ref{lem:2.5} and $\gcd(3, (-1)^n+1+t)=1$. 
Then there exists a family of binary sequences with length $q+1+t$, size $q^2-1$ and correlation upper bounded by 
$$\operatorname{Cor}(\mS)\le 7\cdot \lfloor 2\sqrt{q}\rfloor+|t|.$$
\end{cor}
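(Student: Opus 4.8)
The plan is to specialize Theorem~\ref{thm:3.4} to $d=3$: with this choice the family size becomes $q^{d-1}-1=q^2-1$ and the correlation bound becomes $(2d+1)\lfloor 2\sqrt{q}\rfloor+|t|=7\lfloor 2\sqrt{q}\rfloor+|t|$, exactly as asserted. To apply the theorem I must produce three ingredients: a cyclic elliptic function field $E/\F_q$ with $q+1+t$ rational places, a place $Q$ of $E$ of degree exactly $3$, and the coprimality $\gcd(3,q+1+t)=1$. The first is immediate, since by hypothesis $t$ satisfies one of the three items of Lemma~\ref{lem:2.5}, which guarantees such a cyclic $E$.

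For the coprimality I would simply translate the stated hypothesis into the form required by Theorem~\ref{thm:3.4}. Because $q=2^n$ and $2\equiv-1\pmod 3$, we have $q\equiv(-1)^n\pmod 3$, hence $q+1+t\equiv(-1)^n+1+t\pmod 3$ and therefore $\gcd(3,q+1+t)=\gcd(3,(-1)^n+1+t)=1$ by assumption. This is precisely the condition $\gcd(d,q+1+t)=1$ with $d=3$.

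The only point requiring genuine work is the existence of a place $Q$ with $\deg(Q)=3$, i.e.\ that the count $B_3$ of Lemma~\ref{lem:3.2} is at least one. Writing $N_1=q+1+t$ and $N_3=q^3+1-S_3$ for the numbers of rational points of $E$ over $\F_q$ and $\F_{q^3}$ respectively, the formula of Lemma~\ref{lem:3.2} reduces to $3B_3=N_3-N_1$, as there is no degree-$2$ contribution since $2\nmid 3$. I would then bound both terms by Hasse--Weil, namely $N_3\ge q^3+1-2q^{3/2}$ and $N_1\le q+1+2\sqrt{q}$, to obtain
\[
3B_3=N_3-N_1\ge q^3-q-2q^{3/2}-2\sqrt{q},
\]
which is strictly positive for every $q=2^n\ge 4$; hence $B_3\ge 1$ and a degree-$3$ place $Q$ exists. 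I expect this estimate to be the main obstacle, since, unlike the degree-$2$ situation of Corollary~\ref{cor:3.5} where $B_2>0$ is automatic, positivity of $B_3$ can fail in the smallest case: for $q=2$ one checks directly that $B_3=0$ when $t\in\{1,2\}$, so $n=1$ must be excluded or treated separately. Once $E$, $Q$ and the coprimality are in place, Theorem~\ref{thm:3.4} together with Propositions~\ref{prop:3.1} and~\ref{prop:3.3} yields the family $\mS$ of length $q+1+t$, size $q^2-1$, and correlation $\operatorname{Cor}(\mS)\le 7\lfloor 2\sqrt{q}\rfloor+|t|$, as required.
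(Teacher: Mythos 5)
Your proof follows the paper's route exactly: specialize Theorem~\ref{thm:3.4} to $d=3$, invoking Lemma~\ref{lem:2.5} for the cyclic elliptic function field and Lemma~\ref{lem:3.2} for a place of degree $3$. You are more careful than the paper on two points it leaves implicit --- the translation of $\gcd(3,(-1)^n+1+t)=1$ into $\gcd(3,q+1+t)=1$ via $2\equiv -1\pmod 3$, and the positivity of $B_3$ --- and your observation that $B_3=0$ for $q=2$, $t\in\{1,2\}$ (both admissible under the stated hypotheses) flags a genuine degenerate case that the corollary as written does not exclude.
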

	
\begin{proof}
		Let $E$ be a cyclic elliptic function field with $q+1+t$ rational places. 
		By Lemma \ref{lem:3.2} there exists a place $Q$ of $E$ of degree $\deg(Q)=3$ with $\gcd(3,q+1+t)=1$. 
		Then this corollary follows from Proposition \ref{prop:3.1}, Proposition \ref{prop:3.3} and Theorem \ref{thm:3.4}. 
\end{proof}

\subsection{Binary sequences with a high linear complexity}
	In this subsection, we shall show that the family of binary sequences constructed above has a high linear complexity similarly as \cite[Theorem 3.2]{XKD03}. 
	
\begin{theorem}\label{thm:3.7}
	The linear complexity  of the family of binary sequence $\mS=\{\bs_i: 1\le i\le q^{d-1}-1\}$ satisfies 
		\[LC(\mS)\ge \frac{q+1+2t-2(d+1)\sqrt{q}}{2d\sqrt{q}}.\]
\end{theorem}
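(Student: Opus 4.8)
The plan is to fix an arbitrary nonzero sequence $\bs_i\in\mS$, write $\ell=\ell(\bs_i)$ for its linear complexity, and let $\lambda_0,\dots,\lambda_\ell\in\F_2$ with $\lambda_0=\lambda_\ell=1$ be coefficients realizing the recurrence $\sum_{k=0}^{\ell}\lambda_k s_{i,k+u}=0$ for all $0\le u\le q+t$ (indices modulo $q+1+t$). The first step is to repackage this recurrence as a single function-field statement. Set $w=\sum_{k=0}^{\ell}\lambda_k\,\sigma^{-k}(z_i)\in E$, where $\sigma=\tau_P$. Using $s_{i,j}=\Tr(z_i(P_j))$, Lemma \ref{lem:2.1}(3) (which gives $\sigma^{-k}(z_i)(P_u)=z_i(P_{u+k})$), and the $\F_2$-linearity of $\Tr$ to pull the coefficients $\lambda_k\in\F_2$ inside the trace, I would rewrite the recurrence at shift $u$ as $\Tr\big(w(P_u)\big)=0$. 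Since the recurrence holds for every cyclic shift, this yields
\[\Tr\big(w(P_j)\big)=0\qquad\text{for all }0\le j\le q+t,\]
i.e. the trace of $w$ vanishes at every rational place of $E$.

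Next I would analyze the Artin--Schreier extension $E'=E(y)$ defined by $y^2+y=w$. Since $z_i\in V\setminus\{0\}\subseteq\mL(Q)\setminus\F_q$, the function $z_i$ has a simple pole exactly at $Q$, and by Lemma \ref{lem:2.1} the translate $\sigma^{-k}(z_i)$ has a simple pole at $\sigma^{-k}(Q)$. By Proposition \ref{prop:3.1} the places $\sigma^{-k}(Q)$ are pairwise distinct, so the $k=0$ term contributes a genuine simple pole of $w$ at $Q$; in particular $w\neq0$ and $\nu_Q(w)=-1$ is odd. Because every $z^2+z$ has only even-order poles, $w-(z^2+z)$ retains an odd-order pole at $Q$ for all $z\in E$, so $w\neq z^2+z$ and $E'/E$ is a genuine degree-two extension with full constant field $\F_q$ by Proposition \ref{prop:2.6}. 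Crucially, all poles of $w$ lie at degree-$d$ places with $d\ge2$, hence none of the rational places $P_j$ is a pole of $w$; combined with $\Tr(w(P_j))=0$, every rational place of $E$ splits completely in $E'$, giving $N(E')\ge 2(q+1+t)$.

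The final step is to play this count against the genus. The poles of $w$ are at most the $\ell+1$ distinct degree-$d$ places $\sigma^{-k}(Q)$ with $\lambda_k=1$, each a simple pole, so $m_P=1$ there. Proposition \ref{prop:2.6}(d), with $g(E)=1$, then gives $g(E')=1+\tfrac12\sum_P(m_P+1)\deg(P)\le 1+(\ell+1)d$. On the other hand the Serre bound yields $N(E')\le q+1+2g(E')\sqrt q$, so with $N(E')\ge 2(q+1+t)$ one obtains $g(E')\ge (q+1+2t)/(2\sqrt q)$. Substituting the genus upper bound into $1+(\ell+1)d\ge(q+1+2t)/(2\sqrt q)$ and solving for $\ell$ produces exactly $\ell(\bs_i)\ge\frac{q+1+2t-2(d+1)\sqrt q}{2d\sqrt q}$; taking the minimum over $i$ gives the claimed bound on $\operatorname{LC}(\mS)$.

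I expect the main obstacle to be the translation step together with the verification that $w$ defines a nontrivial Artin--Schreier class. One must apply Lemma \ref{lem:2.1}(3) correctly to commute evaluation past $\sigma$, confirm that the $\F_2$-coefficients can be absorbed into the trace, and above all exploit the distinctness of the translates $\sigma^{-k}(Q)$ from Proposition \ref{prop:3.1} to ensure the simple pole at $Q$ cannot cancel, so that $w\neq z^2+z$ and the genus formula of Proposition \ref{prop:2.6} applies with the clean bound $g(E')\le 1+(\ell+1)d$.
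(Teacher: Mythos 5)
Your proposal is correct and follows essentially the same route as the paper: convert the recurrence into the vanishing of $\Tr(w(P_j))$ at all rational places with $w=\sum_k\lambda_k\sigma^{-k}(z_i)$, pass to the Artin--Schreier extension $y^2+y=w$, bound its genus by $d(\ell+1)+1$ via the pole divisor $\sum_k\lambda_k\sigma^{-k}(Q)$, and play complete splitting ($N\ge 2(q+1+t)$) against the Serre bound. Your explicit verification that $w$ has an odd-order pole at $Q$ (so the extension is nontrivial and Proposition \ref{prop:2.6} applies) is a detail the paper leaves implicit, but it does not change the argument.
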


\begin{proof}
It remains to prove that, for every $1 \le i \le q^{d-1} - 1$, the linear complexity $ \ell (\mathbf s_i)$ of binary sequence $\mathbf s_i$ in $\mathcal{S}$ satisfies 
		\[\ell(\mathbf s_i)\ge \frac{q+1+2t-2(d+1)\sqrt{q}}{2d\sqrt{q}}.\]
		Let $\ell = \ell (\mathbf s_i)$. Then there exist $\ell+1$ binary elements $\lambda_0=1,,\cdots,\lambda_\ell=1\in \F_2$ such that 
		$$\sum_{j=0}^\ell \lambda_j \cdot \Tr(z_i(P_{j+u}))=0$$
		for all integers $u\ge 0$.  Let $\sigma=\tau_P$ be the generator of translation group $T_E$. 
		It is clear that 
		\begin{align*}
			\sum_{j=0}^\ell \lambda_j \cdot \Tr(z_i(P_{j+u}))
			&=\Tr\left(\sum_{j=0}^\ell \lambda_j z_i(P_{j+u})\right)\\
			&=\Tr\left(\sum_{j=0}^\ell \lambda_j z_i(\Gs^{j}(P_{u}))\right)\\
			&=\Tr\left(\sum_{j=0}^\ell \lambda_j \Gs^{-j}(z_i)(P_{u})\right)\\
			&=\Tr(w_i(P_{u})),
		\end{align*}
		provided that $w_i=\sum_{j=0}^\ell \lambda_j \Gs^{-j}(z_i)$. Hence, we obtain
		$$(\Tr(w_i(P_{1})), \Tr(w_i(P_{2})), \cdots,\Tr(w_i(P_{q+1+t})))=\bo.$$  
		Let $L_i=E(y)$ be the Artin-Schreier extension over $E$ generated by 
		$y^2+y=w_i$. From the Strict Triangle Inequality \cite[Lemma 1.1.11]{St09}, the principal divisor of $w_i$ is given by $(w_i)_\infty=\sum_{j=0}^\ell \lambda_j\Gs^{-j}(Q)$. 
		By Proposition \ref{prop:2.6}, we have
		$$2g(L_i)-2=2\cdot (2g(E)-2)+\deg(\operatorname{Diff}(L_i/E))\le 2(\ell+1)\deg(Q).$$
		Hence, the genus of $L_i$ is upper bounded by 
		$$g(L_i)\le d(\ell+1)+1.$$
		It follows that 
		$$2(q+1+t)\le N(L_i)\le q+1+[d(\ell+1)+1]\cdot \lfloor 2\sqrt{q}\rfloor.$$
		Hence, we obtain
		$$\ell\ge \frac{q+1+2t-(d+1)\cdot \lfloor 2\sqrt{q}\rfloor}{d\cdot \lfloor 2\sqrt{q}\rfloor}. \qedhere$$
\end{proof}

\subsection{Comparison} 
In \cite{XKD03}, Xing {\it et al.} provided a construction of binary sequences with a low correlation and a large linear complexity via elliptic curves. 
The family of binary sequences given in \cite{XKD03} is different from our family of binary sequences given in this manuscript, although both families of binary sequences  share the same length, the same upper bound of correlation and the same lower bound of linear complexity from Theorem \ref{thm:3.4}, Theorem \ref{thm:3.7} and \cite[Theorem 5.5]{XKD03}.

Let $d$ be a positive divisor which is relatively prime with $q+1+t$. 
There are exactly $r=B_d/(q+1+t)$ places of $F=E^{T_E}$ of degree $d$ which split completely in the extension $E/F$. 
Let $Q_1,Q_2,\cdots,Q_r$ be the different places of $E$ with degree $d$ such that their restrictions are pairwise distinct. Xing {\it et al.}  chosen a unique element in every set $\mL(Q_i)\setminus \F_q$ for $1\le i\le r$.  Hence, the size of their family of binary sequences is $r=B_d/(q+1+t)$. 
However, we choose all nonzero elements in the complementary dual linear space $V$ of $\F_q$ in the Riemann-Roch space $\mL(Q)=\F_q\oplus V$ for any fixed place of $Q$ of $E$ with degree $d$ in this manuscript. By Lemma \ref{lem:3.2}, Corollary \ref{cor:3.5} and Corollary \ref{cor:3.6}, the size of the new family of binary sequences in this manuscript is larger than the one given in \cite{XKD03}.

\section{An explicit construction of such a family of binary sequences}\label{sec:4}
The previous section provides a theoretical framework for constructing a new family of binary sequences with a low correlation and a high linear complexity using elliptic function fields. 
In this section, we will make this construction explicitly. 
We achieve this by determining the automorphisms and rational places explicitly. Additionally, we present some numerical results generated by using the software Sage. 
It turns out that the actual correlation of our family of binary sequences is significantly lower than the estimated upper bounds provided in Theorem \ref{thm:3.4}.

\subsection{Algorithm}\label{sec:4.1}
Now we provide an algorithm to generate the family of binary sequences with a low correlation and a high linear complexity constructed in Section \ref{sec:3}.
Such an explicit construction of sequences with a low correlation can be given as follows.

\bigskip
\begin{center}
{\bf Algorithm: Explicit construction of sequences with a low correlation}
\end{center}
\begin{itemize}
\item {\bf Step 1:} Input a prime power $q=2^n$ and a fixed integer $t$ given in Lemma \ref{lem:2.5}.
\item {\bf Step 2:} Find a cyclic elliptic function field $E/\F_q$ with exactly $q+1+t$ rational places.
\item {\bf Step 3:} Find a rational place $P=(\alpha_1,\beta_1)$ of $E$ with order $q+1+t$ and calculate $P_j=[j]P=(\alpha_j,\beta_j)$ for $0\le j\le q+t$ from \cite[Group Law Algorithm 2.3]{Si86}.
\item {\bf Step 4:} Let $\lambda=\frac{y-\beta_1}{x-\alpha_1}$ and $v=\frac{\beta_1x-\alpha_1y}{x-\alpha_1}$. 
The automorphism $\tau_P$ of $E$ can be determined explicitly as 
$$\tau_P(x)=\lambda^2+a_1\lambda-a_2-x-\alpha_1, \quad \tau_P(y)=-(\lambda+a_1)\tau_P(x)-v-a_3. $$
Let $T_E$ be the translation group $E$ which is generated by $\tau_P$. Let $F$ be the fixed subfield of $E$ with respect to $T_E$. 
\item {\bf Step 5:} Find the Riemann-Roch space of $\mL(Q)$ for any place $Q$ with degree $d\ge 2$ which is relatively prime to $q+1+t$ and determine the vector space $V$ such that $\mL(Q)=\F_q\oplus V$. 
\item {\bf Step 6:} Label all elements of $V\setminus \{0\}$ with $\{z_i: 1\le i\le q^{d-1}-1\}$. 
\item {\bf Step 7:} Output a family of sequences $\mS=\{\bs_i: 1\le i\le q^{d-1}-1\}$ defined by $$\bs_i=(s_{i,0},s_{i,1},\cdots,s_{i,q+t}) \text{ with } s_{i,j}={\Tr(z_i(P_j))} \text{ for } 0\le j\le q+t.$$
\item {\bf Step 8:}  Output the correlation $\operatorname{Cor}(\mS)=\max\{\{|\sum_{k=0}^{q+t} (-1)^{s_{i,k}}(-1)^{s_{i,k+u}}|:1\le i\le q^{d-1}-1, 1\le u\le q+t\} \cup \{|\sum_{k=0}^{q+t} (-1)^{s_{i,k}}(-1)^{s_{j,k+u}}|: 1\le i\neq j\le q^{d-1}-1, 0\le u\le q+t\}\}.$
\end{itemize}
\bigskip

\begin{rem}
Now we estimate the  computational complexity on generating the family of binary sequences from our construction. We consider the case when $d=2$. 
In Step 2, to find an  elliptic curve with a specific number of points, one would typically generate random curves and count their points. One of the most effective algorithms for counting points on elliptic curves over finite fields is the Schoof-Elkies-Atkin (SEA) algorithm  which takes  $O((\log q)^6)$ time \cite{V20}. Thus the worst case complexity for Step 1 is 
 $O(q^5(\log q)^6)$.
In Step 3, for each element, we need to check if it is a generator which takes at most $O(q)$, thus the total time is $O(q^2(\log q)^2)$.
In Step 7, we do the evaluation which takes $O(q^2(\log q)^3)$.
In Step 8, we need to compute the correlation between any two sequences as well as the cyclic shift, thus  the computational complexity is $O(q^4)$. 
The complexity of other steps are small, so they can be neglected. Thus, the overall complexity of generating the family of binary sequences is $O(q^5(\log q)^6)$. 
\end{rem}

\subsection{Numerical results}\label{sec:4.2}
In this subsection, we list some numerical results on the parameters of the family of binary sequences obtained from elliptic curves with the help of Sage software. We consider the case where $d=2$ in the following.

\begin{table}[h]
	\setlength{\abovecaptionskip}{0pt}%
	\setlength{\belowcaptionskip}{10pt}%
	\caption{PARAMETERS OF OUR SEQUENCES WITH LENGTH EQUAL TO $q$}
	\center
	\begin{tabular}{@{}|c|c|c|c|c|@{}}
		\toprule
		Field Size & Seq. Length & Family Size & Max Correlation \\ \midrule
		64    & 64            & 63         & 38                   \\ \midrule
		128   & 128           & 127        & 60                        \\ \midrule
		256  & 256            & 255        & 86                      \\ \midrule{}
		512   & 512            & 511         &  124                     \\\midrule
		1024    & 1024            & 31       &  184                   \\ \midrule
		2048    & 2048            & 63         & 276                   \\ \midrule
		4096  & 4096           & 127        & 416                      \\ 
	
		\bottomrule
	\end{tabular}
\label{tab:2}
\end{table}

Let $q=2^n$ be a prime power. 
By Lemma \ref{lem:2.5}, there exists a cyclic elliptic function field $E / {\F}_q$ with exactly $q$ rational places. 
By Corollary \ref{cor:3.6}, we can construct a family of binary sequences with length $q$, family size $q^2-1$ and correlation upper bounded by $14\sqrt{q}+1$.  
We list some numerical results for this case in Table \ref{tab:2}.  
It turns out that the real correlation of our family of binary sequence of length $q$ is much better than the estimated  upper bounds given in Theorem \ref{thm:3.4}.  

Let $q=2^n$ be a prime power.  Let $t$ be $\sqrt{q}$ if $n$ is even, or $\sqrt{2q}$ if $n$ is odd. It is trivial that $\gcd(2, q+1+t)=1$. 
By Corollary \ref{cor:3.5}, we can construct a family of binary sequences with family size $q-1$ and odd length $q+1+t$.  We list some numerical results for this case in Table \ref{tab:3}.

\begin{table}[h]
	\setlength{\abovecaptionskip}{0pt}%
	\setlength{\belowcaptionskip}{10pt}%
	\caption{PARAMETERS OF OUR SEQUENCES OF ODD LENGTHS}
	\center
	\begin{tabular}{@{}|c|c|c|c|@{}}
		\toprule
		Field Size & Seq. Length & Family Size & Max Correlation  \\ \midrule
		 64 &   73    &63   & 39 \\ \midrule
	    128  & 145  &127     & 57     \\ \midrule
		256&   273    &255  & 89 \\ \midrule
		512&   545   & 511     &    137   \\ \midrule
		1024&1057     &1023   & 191\\
		\bottomrule
	\end{tabular}
\label{tab:3}
\end{table}

In fact, we are able to generate more families of binary sequences with flexible lengths $q+1+t$ with $t$ being an integer between $-2\sqrt{q}$ and $2\sqrt{q}$.

\section{Conclusion}
While significant progress has been made in constructing binary sequences with good parameters, there remains a shortage of options for sequence lengths in existing results. Therefore, it is imperative to develop sequences with alternative lengths to accommodate diverse scenarios.
Most existing families of binary sequences  use the cyclic structures inherent in the multiplicative cyclic group of $\mathbb{F}_{p^n}$. Notably, recent work by \cite{HJMX24,JMX22} capitalized on the cyclic structure of $q+1$ rational points on projective lines over $\mathbb{F}_q$ to design a family of binary sequences with a length of $q+1$.

In this paper, we exploit the cyclic group structure of rational points of elliptic curves to design a family of binary sequences of length $2^n+1+t$ with a low correlation and a high linear complexity. 
Our family size surpasses that of \cite{XKD03}. Furthermore, our construction offers the potential to derive additional sequences with a low correlation and other desirable parameters.


\begin{thebibliography}{10}

\bibitem{AP07} A. N. Akansu, R. Poluri, ``Walsh-like nonlinear phase orthogonal codes for direct sequence CDMA communications," \textit{IEEE Trans. Signal Processing}, vol. 55, no. 7, pp. 3800--3806, Jul. 2007.

\bibitem{Span} S. Boztas and P. V. Kumar, ``Binary sequences with Gold-like correlation but larger linear span," \textit{IEEE Trans. Inf. Theory}, vol. 40, no. 2, pp. 532--537, Feb. 1994.

\bibitem{CHPW15} G. Cui, Y. He, P. Li, W. Wang, ``Enhanced timing advanced estimation with symmetric Zadoff-Chu sequences for satellite systems," \textit{IEEE Communication Letters}, vol. 19, no. 5, pp. 747--750, May 2015.

\bibitem{E51} M. Eichler, Introduction to the Theory of Algebraic Numbers and Functions, New York: Academic, 1951. 

\bibitem{m} S. W. Golomb, {\it Shift Register Sequences}, Aegean Park Press, 1967.

\bibitem{gong} G. Gong, ``New designs for signal sets with low cross correlation, balance property, and large linear span: $GF(p)$ case," \textit{IEEE Trans. Inf. Theory}, vol. 48, no. 11, pp. 2847--2867, Nov. 2002.


\bibitem{Jin21} L. Jin, D. Chen, L. Qian, J. Teng and S. Chen, ``Construction of binary sequences with low correlation via multiplicative quadratic character over finite fields of odd characteristics," \textit{IEEE Trans. Inf. Theory}, vol. 67, no. 4, pp. 2236--2244, Apr. 2021.

\bibitem{HJMX24} X.\ Hu, L.\ Jin, L.\ Ma and C.\ Xing, ``Binary Sequences With a Low Correlation via Cyclotomic Function Fields of Odd Characteristic'', {\itshape IEEE Trans\ Inf.\ Theory}, vol.\ 70, no.\ 2, pp.\ 1397--1407, Feb.\ 2024. 

\bibitem{JMX20} L. Jin, L. Ma and C. Xing, ``Construction of optimal locally repairable codes via automorphism groups of rational function fields," \textit{IEEE Trans. Inf. Theory}, vol. 66, no. 1, pp. 210--221, Jan. 2020.


\bibitem{JMX22} L. Jin, L. Ma and C. Xing, ``Binary sequences with a low correlation via cyclotomic function fields," \textit{IEEE Trans. Inf. Theory}, vol. 68, no. 5, pp. 3445--3454, May 2022.

\bibitem{Kas}T. Kasami, ``Weight distribution formula for some class of cyclic codes," Coordinated Science Library R-285, 1996.

\bibitem{K10}  P. Kim; A. Vanelli-Coralli, M. Villanti, R. Pedone, S. Cioni, M. Neri, C. Palestini, M. Papaleo, H. Lee and G. E. Corazza, ``Direct sequence spectrum spreading techniques for next generation mobile broadband satellite services," \textit{International Journal of Satellite Communications and Networking}, vol. 28, pp.157--181, May 2010.

\bibitem{Klap}A. M. Klapper, ``D-form sequences: families of sequences with low correlation values and large linear spans," \textit{IEEE Trans. Inf. Theory}, vol. 41, no. 2, pp. 423--431, Feb. 1995.

\bibitem{Lempel} A. Lempel, M. Cohn, and W. Eastman, ``A class of balanced binary sequences with optimal autocorrelation properties,"
\textit{IEEE Trans. Inf. Theory}, vol. 23, no. 1, pp. 38--42, Jan. 1977.

\bibitem{LN83} R. Lidl and H. Niederreiter, {\it Finite Fields}, Encyclopedia of Mathematics and Its Applications, Cambridge University Press, Cambridge, 1983.

\bibitem{MX23} L. Ma and C. Xing, ``The group structures of automorphism groups of elliptic curves over finite fields and their applications to optimal locally repairable codes,'' {\itshape J. Combin. Theory Ser. A}, vol. 193, 105686, Jan 2023. 

\bibitem{NX01} H. Niederreiter and C. Xing, {\it Rational Points on Curves over Finite Fields: Theory and Applications}, LMS 285, Cambridge, 2001.

\bibitem{NX14} H. Niederreiter and C. Xing, ``Sequences with high nonlinear complexity," \textit{IEEE Trans. Inf. Theory}, vol. 60, no. 10, pp. 6696--6701, Oct. 2014.

\bibitem{No} J. No and P. V. Kumar, ``A new family of binary pseudo random sequences having optimal periodic correlation properties and larger linear span," \textit{IEEE Trans. Inf. Theory}, vol. 35, no. 2, pp. 371--379, Feb. 1989.

\bibitem{Legendre} J.-S. No, H. Chung, H.-Y. Song, and K. Yang, ``Trace
representation of Legendre sequences of Mersenne prime period," \textit{IEEE Trans. Inf. Theory}, vol. 42, no. 6, pp. 2254--2255, Nov. 1996.

\bibitem{Bent} J. Olsen, R. Scholtz, and L. Welch, ``Bent-function sequences," IEEE Trans. Inf. Theory, vol. 28, no. 6, pp. 858--864, Jun. 1982.

\bibitem{Pat}K. G. Paterson, ``Binary sequence sets with favorable correlations from difference sets and MDS codes," \textit{IEEE Trans. Inf. Theory}, vol. 44, no. 1, pp. 172-180, Jan. 1998.


\bibitem{Ru87} H.G. Ruck, {\it A note on elliptic curves over finite fields}, Math. Computation {149}(1987), no. 179, pp. 301--304.

\bibitem{Rushanan} J. J. Rushanan, ``Weil sequences: A family of binary sequences with good correlation properties," In \textit{2006 IEEE International Symposium on Information Theory}, pp. 1648--1652, July 2006.

\bibitem{Si86}  J. H. Silverman, {\it The Arithmetic of Elliptic Curves}, (Graduate Texts in Mathematics), vol. 106, Berlin, Germany: Springer Verlag, 1986.

\bibitem{St09}  H. Stichtenoth, {\it Algebraic Function Fields and Codes} (Graduate Texts in Mathematics), vol. 254, Berlin, Germany: Springer Verlag, 2009.

\bibitem{Su}M. Su and A. Winterhof, ``Autocorrelation of Legendre-Sidelnikov Sequences," \textit{IEEE Trans. Inf. Theory}, vol. 56, no. 4, pp. 1714--1718, Apr. 2010.

\bibitem{Tang2} X. Tang, T. Helleseth, L. Hu, and W. Jiang, ``Two new families of optimal binary sequences obtained from quaternary
sequences," \textit{IEEE Trans. Inf. Theory}, vol. 55, no. 4, pp. 1833--1840, Apr. 2009.

\bibitem{Tang3} X. Tang, P. Udaya, and P. Fan, ``Generalized binary Udaya-Siddiqi sequences," \textit{IEEE Trans. Inf. Theory}, vol. 53, no. 3, pp. 1225--1230, Mar. 2007.

\bibitem{Uda}P. Udaya and M. U. Siddiqi, ``Optimal biphase sequences with large linear complexity derived from sequences over $\Z_4$," \textit{IEEE Trans. Inf. Theory}, vol. 42, no. 1, pp. 206--216, Jan. 1996.

\bibitem{V20} F. Vercauteren, ``The SEA algorithm in characteristic $2$,''  Preprint, 2000.

\bibitem{Wa69} W. C. Waterhouse, ``Abelian varieties over finite fields,'' {\itshape Ann.\ Sci.\ \'Ec.\ Norm.\ Sup\'er.}, s\'erie 4, tome\ 2, no.\ 4, pp.\ 521--560, Apr.\ 1969.

\bibitem{XKD03} C. Xing, P.V. Kumar and C. Ding,  ``Low-correlation, large linear span sequences from function fields", \textit{IEEE Trans. Inf. Theory}, vol. 49, no. 6, pp. 1439--1446, Jun. 2003. 

\bibitem{16ZXE} W. Zhang, C. Xie, E. Pasalic, ``Large sets of orthogonal sequences suitable for applications in CDMA systems," \textit{IEEE Trans. Inf. Theory}, vol. 62, no. 6, pp. 3757--3767, Jun. 2016.

\bibitem{ZT}Z. Zhou and X. Tang, ``Generalized modified Gold sequences," \textit{Designs, Codes and Cryptography}, vol. 60, pp. 241--253, 2011.





 

 


\end{thebibliography}
\end{document}